\numberwithin{equation}{section}
\newcommand{\N}{\mathbb{N}}
\newcommand{\R}{\mathbb{R}}
\newcommand{\sfd}{{\sf d}}
\renewcommand{\d}{{\mathrm d}}
\newcommand{\restr}[1]{\lower3pt\hbox{\(|_{#1}\)}}
\newcommand{\eps}{\varepsilon}
\newcommand{\nchi}{{\raise.3ex\hbox{\(\chi\)}}}
\newcommand{\fr}{\penalty-20\null\hfill\(\blacksquare\)}
\newcommand{\mm}{{\mathfrak m}}
\newcommand{\LIP}{{\rm LIP}}
\newcommand{\lip}{{\rm lip}}
\DeclareMathOperator{\CD}{CD}
\DeclareMathOperator{\RCD}{RCD}
\newtheorem{theorem}{Theorem}[section]
\newtheorem{lemma}[theorem]{Lemma}
\newtheorem{proposition}[theorem]{Proposition}
\newtheorem{definition}[theorem]{Definition}
\newtheorem{remark}[theorem]{Remark}
\title[Rectifiability of \(\RCD(K,N)\) spaces]{Rectifiability of \(\RCD(K,N)\) spaces via $\delta$-splitting maps}
\author{Elia Bru\`{e}, Enrico Pasqualetto, and Daniele Semola}
\address{Scuola Normale Superiore\\
         Piazza dei Cavalieri 7 \\
         56126 Pisa \\
         Italy}
\email{elia.brue@sns.it}
\address{University of Jyvaskyla\\
         Department of Mathematics and Statistics \\
         P.O. Box 35 (MaD) \\
         FI-40014 University of Jyvaskyla \\
         Finland}
\email{enrico.e.pasqualetto@jyu.fi}
\address{Scuola Normale Superiore\\
         Piazza dei Cavalieri 7 \\
         56126 Pisa \\
         Italy}
\email{daniele.semola@sns.it}
\begin{document}
\date{\today} 
\subjclass[2010]{26B30, 26B20, 53C23}
\keywords{Rectifiability, $\rm RCD$ space, tangent cone}
\date{\today} 

\begin{abstract}
In this note we give new proofs of rectifiability of $\RCD(K,N)$ spaces as metric measure spaces and lower semicontinuity of the essential dimension, via $\delta$-splitting maps. The arguments are inspired by the Cheeger-Colding theory for Ricci limits and rely on the second order differential calculus developed by Gigli and on the convergence and stability results by Ambrosio-Honda.   
\end{abstract}
\maketitle
%
%
%
%
\section*{Introduction}
In the last years the study of $\RCD(K,N)$ metric measure spaces has undergone a fast development. After the introduction of the \textit{curvature-dimension} condition $\CD(K,N)$ in the independent works \cite{Sturm06I,Sturm06II} and \cite{Lott-Villani09}, the notion of $\RCD(K,N)$ space was proposed in \cite{Gigli12} as a finite-dimensional counterpart of $\RCD(K,\infty)$, introduced in \cite{AmbrosioGigliSavare11-2} (see also \cite{AmbrosioGigliMondinoRajala12} for the case of $\sigma$-finite reference measure and \cite{BacherSturm10} for the introduction of the \textit{reduced curvature-dimension} condition $\CD^*(K,N)$). In the infinite-dimensional case the equivalence of the original Lagrangian approach with an Eulerian one, based on the Bochner inequality, was studied in \cite{AmbrosioGigliSavare12}. Then \cite{Erbar-Kuwada-Sturm13} established equivalence with the dimensional Bochner inequality for the so-called class $\RCD^*(K,N)$ (see also \cite{AmbrosioMondinoSavare13}). Equivalence between $\RCD^*(K,N)$ and $\RCD(K,N)$ has been eventually achieved in \cite{CavallettiMilman16} in the case of finite reference measure, closing the circle. 
Apart from smooth weighted Riemannian manifolds (with generalized Ricci tensor bounded from below), the $\RCD(K,N)$ class includes Ricci limit spaces, whose study was initiated by Cheeger-Colding in the nineties \cite{Cheeger-Colding97I,Cheeger-Colding97II,Cheeger-Colding97III} (see also the survey \cite{Cheeger01}), and Alexandrov spaces \cite{Petrunin11}. We refer the reader to the survey \cite{AmbrosioICM} for an account about this quickly developing research area. 

Many efforts have been recently aimed at understanding the structure theory of $\RCD(K,N)$ spaces. After \cite{Mondino-Naber14} by Mondino-Naber, we know that they are rectifiable as metric spaces and later, in the three independent works by De Philippis-Marchese-Rindler, Kell-Mondino and Gigli together with the second named author \cite{DPMR16,MK16,GP16-2}, the analysis was sharpened taking into account the behaviour of the reference measure and getting rectifiability as metric measure spaces. Moreover, in the recent \cite{BS18}, the first and the third named authors proved that $\RCD(K,N)$ spaces have constant dimension, in the almost everywhere sense.

The development of this theory was inspired in turn by the results obtained for Ricci limit spaces in the seminal papers by Cheeger-Colding (see also \cite{ColdingNaber12} by Colding-Naber for constancy of dimension).\\ 
In the proofs given in \cite{Cheeger-Colding97I,Cheeger-Colding97III} a crucial role was played by $(k,\delta)$-splitting maps: 
\begin{definition}
	Let \((X,\sfd,\mm)\) be an \(\RCD(-1,N)\) space. Let \(x\in X\) and \(\delta>0\)
	be given. Then a map \(u=(u_1,\ldots,u_k)\colon B_r(x)\to\R^k\) is said to be
	a \emph{\((k,\delta)\)-splitting map} provided:
	\begin{itemize}
		\item[\(\rm i)\)] \(u_a\colon B_r(x)\to\R\) is harmonic and \(C_N\)-Lipschitz
		for every \(a=1,\ldots,k\),
		\item[\(\rm ii)\)] \(r^2\fint_{B_r(x)}\big|{\rm Hess}(u_a)\big|^2\,\d\mm\leq\delta\)
		for every \(a=1,\ldots,k\),
		\item[\(\rm iii)\)] \(\fint_{B_r(x)}|\nabla_\mm u_a\cdot\nabla_\mm u_b
		-\delta_{ab}|\,\d\mm\leq\delta\) for every \(a,b=1,\ldots,k\).
	\end{itemize}
\end{definition}
These maps provide approximations, in the integral $L^2$-sense and up to the second order, of $k$ independent coordinate functions in the Euclidean space. They were introduced in \cite{Cheeger-Colding96}, in the study of Riemannian manifolds with lower Ricci curvature bounds.\\
Item ii) in the definition of $\delta$-splitting maps is about smallness of the $L^2$-norm of the Hessian, in scale invariant sense. In \cite{Cheeger-Colding97I,Cheeger-Colding97III} and in more recent works dealing with Ricci limits as \cite{CheegerJiangNaber18}, $\delta$-splitting maps are built only at the level of the smooth approximating sequence, where there is a clear notion of Hessian available, the metric information they encode ($\eps$-GH closeness to Euclidean spaces) is then passed to the limit.\\ 
Prior than \cite{Gigli14}, there was no notion of Hessian available in the $\RCD$ framework. This, together with the absence of smooth approximating sequences, motivated the necessity to find an alternative approach to rectifiability in \cite{Mondino-Naber14,DPMR16,MK16,GP16-2} with respect to the Cheeger-Colding theory. A new \textit{almost splitting via excess} theorem was the main ingredient playing the role of the theory of $\delta$-splitting maps in \cite{Mondino-Naber14} while, studying the behaviour of the reference measure with respect to charts, a crucial role was played in both \cite{DPMR16,MK16,GP16-2}, by a recent and powerful result obtained by De Philippis-Rindler \cite{DPR}.

Nowadays we have at our disposal both a second order differential calculus on $\RCD$ spaces \cite{Gigli14} and general convergence and stability results for Sobolev functions on converging sequences of $\RCD(K,N)$ spaces \cite{AmbrosioHonda17,AmbrosioHonda18}.
In our previous paper \cite{BPS19} we exploited all these tools to prove rectifiability for reduced boundaries of sets of finite perimeter in this context. The study of \cite{BPS19} was devoted to the theory in codimension one, which required some additional ideas and technical efforts, but it was evident that similar arguments could provide new and more direct proofs of rectifiability for $\RCD(K,N)$ spaces in the spirit of those in \cite{Cheeger-Colding97I,Cheeger-Colding97III}. 

Taking as a starting point existence of Euclidean tangents almost everywhere with respect to the reference measure, obtained by Gigli-Mondino-Rajala in \cite{Gigli-Mondino-Rajala15}, in this short note we provide the arguments to get uniqueness (almost everywhere) of tangents and rectifiability of $\RCD(K,N)$ spaces as metric measure spaces via $\delta$-splitting maps. Moreover, we recover via a different strategy the result about lower semicontinuity of the so called essential dimension proved firstly in \cite{Kitabeppu2019}. 
After Section \ref{sec:prel}, dedicated to review some preliminaries and establish the basic tool about propagation of the $\delta$-splitting property, the remaining Subsection \ref{subsec:uniq}, Subsection \ref{subsec:rect} and Subsection \ref{subsec:measure} are devoted to uniqueness of tangents and lower semicontinuity of the essential dimension, metric rectifiability and the behaviour of the reference measure under charts, respectively.

\subsection*{Acknowledgements} 
The authors wish to thank Luigi Ambrosio, Nicola Gigli, Andrea Mondino and Tapio Rajala for useful comments on an earlier version of this note.\\
The second named author was partially supported by the Academy of Finland, projects 307333 and 314789.
Part of this work was developed while the first and third named authors were visiting the Department of Mathematics and Statistics of the University of Jyvaskyla: they wish to thank the institute for the excellent working conditions and the stimulating atmosphere.

\section{Preliminaries and notation}\label{sec:prel}
\subsection{Differential calculus on metric measure spaces}
For our purposes, a \emph{metric measure space} is a triple \((X,\sfd,\mm)\),
where \((X,\sfd)\) is a proper metric space, while \(\mm\geq 0\)
is a Radon measure on \(X\). Given a Lipschitz function
\(f\colon X\to\R\), we will denote by \(\lip(f)\colon X\to[0,+\infty)\) its
\emph{slope}, which is the function defined as
\[
\lip(f)(x)\coloneqq\varlimsup_{y\to x}\frac{\big|f(x)-f(y)\big|}{\sfd(x,y)}
\quad\text{ for every accumulation point }x\in X
\]
and \(\lip(f)(x)\coloneqq 0\) elsewhere. Given any open set \(\Omega\subseteq X\),
we denote by \(\LIP_{\rm c}(\Omega)\) the family of all Lipschitz functions
\(f\colon\Omega\to\R\) whose support is bounded and satisfies
\({\rm dist}\big({\rm spt}(f),X\setminus\Omega\big)>0\).
\subsubsection{Sobolev space}
Following \cite{Cheeger00}, we define the \emph{Sobolev space} \(H^{1,2}(X,\sfd,\mm)\) as
\[
H^{1,2}(X,\sfd,\mm)\coloneqq\big\{f\in L^2(\mm)\;\big|\;{\rm Ch}(f)<+\infty\big\},
\]
where the \emph{Cheeger energy} \({\rm Ch}\colon L^2(\mm)\to[0,+\infty]\)
is the convex, lower semicontinuous functional
\[
{\rm Ch}(f)\coloneqq\inf\bigg\{\varliminf_{n\to\infty}\int\lip^2(f_n)\,\d\mm
\;\bigg|\;(f_n)_n\subseteq L^2(\mm)\text{ bounded Lipschitz,}\,
\lim_{n\to\infty}\|f_n-f\|_{L^2(\mm)}=0\bigg\}.
\]
It holds that \(H^{1,2}(X,\sfd,\mm)\) is a Banach space if endowed with the norm
\(\|\cdot\|_{H^{1,2}(X,\sfd,\mm)}\), given by
\[
\|f\|_{H^{1,2}(X,\sfd,\mm)}\coloneqq\Big(\|f\|_{L^2(\mm)}^2+{\rm Ch}(f)\Big)^{1/2}
\quad\text{ for every }f\in H^{1,2}(X,\sfd,\mm).
\]
Given any \(f\in H^{1,2}(X,\sfd,\mm)\), one can select a canonical object
\(|D_\mm f|\in L^2(\mm)\) -- called the \emph{minimal relaxed slope} of \(f\)
-- for which \({\rm Ch}(f)\) admits the integral representation
\({\rm Ch}(f)=\int|D_\mm f|^2\,\d\mm\).\\
We have chosen to stress the dependence on the measure for the gradient and the other differential objects, here and in the sequel, to avoid confusion. 

\medskip

Given an open set \(\Omega\subseteq X\), we define \(H^{1,2}_{\rm loc}(\Omega,\sfd,\mm)\)
as the space of all those \(f\in L^2_{\rm loc}(\mm)\) such that
\(\eta f\in H^{1,2}(X,\sfd,\mm)\)
holds for every \(\eta\in\LIP_{\rm c}(\Omega)\).
Thanks to the locality property of the minimal relaxed slope, it makes sense to define
\(|D_\mm f|\in L^2_{\rm loc}(\mm)\) as
\[
|D_\mm f|\coloneqq\big|D_\mm(\eta f)\big|\;\;\mm\text{-a.e.\ on }\{\eta=1\},
\quad\text{ for any }\eta\in\LIP_{\rm c}(\Omega).
\]
Finally, we define \(H^{1,2}(\Omega,\sfd,\mm)\) as the space of all
\(f\in H^{1,2}_{\rm loc}(\Omega,\sfd,\mm)\) such that \(f,|D_\mm f|\in L^2(\mm)\).
\subsubsection{Tangent module}
Whenever \(H^{1,2}(X,\sfd,\mm)\) is a Hilbert space, we will say that
\((X,\sfd,\mm)\) is \emph{infinitesimally Hilbertian}. In this case,
we recall from \cite{Gigli14} that the \emph{tangent module} \(L^2(TX)\) and the
corresponding \emph{gradient} map \(\nabla_\mm\colon H^{1,2}(X,\sfd,\mm)\to L^2(TX)\)
can be characterised as follows: \(L^2(TX)\) is an \(L^2(\mm)\)-normed
\(L^\infty(\mm)\)-module (in the sense of \cite[Definition 1.3]{Gigli17})
that is generated by \(\big\{\nabla_\mm f\,:\,f\in H^{1,2}(X,\sfd,\mm)\big\}\),
while \(\nabla_\mm\) is a linear map satisfying \(|\nabla_\mm f|=|D_\mm f|\)
\(\mm\)-a.e.\ on \(X\) for all \(f\in H^{1,2}(X,\sfd,\mm)\).
The pointwise scalar product
\(L^2(TX)\times L^2(TX)\ni(v,w)\mapsto v\cdot w\in L^1(\mm)\),
\[
v\cdot w\coloneqq\frac{|v+w|^2-|v|^2-|w|^2}{2}\quad\text{ for every }v,w\in L^2(TX),
\]
is a symmetric bilinear form, as a consequence of the infinitesimal Hilbertianity
assumption.

The dual module of \(L^2(TX)\) is denoted by \(L^2(T^*X)\) and
called the \emph{cotangent module} of \(X\).
\medskip

In the framework of weighted Euclidean spaces, we have another notion
of tangent module at our disposal. Given a Radon measure \(\nu\geq 0\)
on \(\R^k\), we denote by \(L^2(\R^k,\R^k;\nu)\) the space of all \(L^2(\nu)\)-maps
from \(\R^k\) to itself. It turns out that \(L^2(\R^k,\R^k;\nu)\) is an
\(L^2(\nu)\)-normed \(L^\infty(\nu)\)-module generated by
\(\big\{\nabla f\,:\,f\in C^\infty_c(\R^k)\big\}\), where
\(\nabla f\colon\R^k\to\R^k\) stands for the `classical' gradient of \(f\).
\subsubsection{Divergence and Laplacian}
In the setting of infinitesimally Hilbertian spaces \((X,\sfd,\mm)\),
one can consider the following notions of divergence and Laplacian:
\begin{itemize}
\item \textsc{Divergence.} We declare that \(v\in L^2(TX)\) belongs to
\(D({\rm div}_\mm)\) provided there exists a (uniquely determined)
function \({\rm div}_\mm(v)\in L^2(\mm)\) such that
\[
\int\nabla_\mm f\cdot v\,\d\mm=-\int f\,{\rm div}_\mm(v)\,\d\mm
\quad\text{ for every }f\in H^{1,2}(X,\sfd,\mm).
\]
\item \textsc{Laplacian.} Given any open set \(\Omega\subseteq X\),
we declare that \(f\in H^{1,2}(\Omega,\sfd,\mm)\) belongs to \(D(\Omega,\Delta_\mm)\)
provided there exists a (uniquely determined) function \(\Delta_\mm f\in L^2(\Omega)\)
such that
\[
\int_\Omega\nabla_\mm f\cdot\nabla_\mm g\,\d\mm=-\int_\Omega g\,\Delta_\mm f\,\d\mm
\quad\text{ for every }g\in H^{1,2}_0(\Omega,\sfd,\mm),
\]
where \(H^{1,2}_0(\Omega,\sfd,\mm)\) stands for the closure of \(\LIP_{\rm c}(\Omega)\)
in \(H^{1,2}(X,\sfd,\mm)\). For brevity, we shall write \(D(\Delta_\mm)\) in place
of \(D(X,\Delta_\mm)\).
\end{itemize}
The domains \(D({\rm div}_\mm)\) and \(D(\Omega,\Delta_\mm)\)
are vector subspaces of \(L^2(TX)\) and \(H^{1,2}(\Omega,\sfd,\mm)\), respectively.
Moreover, the operators \({\rm div}_\mm\colon D({\rm div}_\mm)\to L^2(\mm)\)
and \(\Delta_\mm\colon D(\Omega,\Delta_\mm)\to L^2(\Omega)\) are linear.

It can be readily checked that a given function \(f\in H^{1,2}(X,\sfd,\mm)\)
belongs to \(D(\Delta_\mm)\) if and only if its gradient \(\nabla_\mm f\)
belongs to \(D({\rm div}_\mm)\). In this case, it also holds that
\(\Delta_\mm f={\rm div}_\mm(\nabla_\mm f)\).
\subsection{\texorpdfstring{\(\rm RCD\)}{RCD} spaces}
We assume the reader to be familiar with the language of \(\RCD(K,N)\) spaces
and the notion of pointed measured Gromov--Hausdorff convergence (often
abbreviated to pmGH).

We recall the following scaling property: if \((X,\sfd,\mm)\) is an \(\RCD(K,N)\)
space, then \((X,\sfd/r,\lambda\mm)\) is an \(\RCD(r^2K,N)\) space for any choice
of \(r,\lambda>0\). Furthermore,
there exists a distance \(\sfd_{\rm pmGH}\) on the set (of isomorphism classes)
of \(\RCD(K,N)\) spaces that metrises the pmGH-topology \cite{Gigli-Mondino-Savare13}.
\begin{remark}\label{rmk:cptness_RCD}{\rm
		Any sequence \((X_n,\sfd_n,\mm_n,x_n)\), \(n\in\N\) of pointed
		\(\RCD(K,N)\) spaces converges, up to the extraction of a subsequence, to some pointed \(\RCD(K,N)\)
		space \((X,\sfd,\mm,x)\) with respect to the pmGH-topology.
		This follows from a compactness argument due to Gromov
		and the stability of the \(\RCD(K,N)\) condition.
		\fr}\end{remark}

\subsubsection{Test functions}
Let \((X,\sfd,\mm)\) be an \(\RCD(K,N)\) space. A fundamental class of Sobolev
functions on \(X\) is given by the algebra of \emph{test functions} \cite{Gigli14}:
\[
{\rm Test}(X)\coloneqq\Big\{f\in D(\Delta_\mm)\cap L^\infty(\mm)\;\Big|\;
|D_\mm f|\in L^\infty(\mm),\;\Delta_\mm f\in H^{1,2}(X,\sfd,\mm)\cap L^\infty(\mm)\Big\}.
\]
Since \(\RCD\) spaces enjoy the Sobolev-to-Lipschitz property, we know that
any element of \({\rm Test}(X)\) admits a Lipschitz representative. Moreover,
it holds that \({\rm Test}(X)\) is dense in \(H^{1,2}(X,\sfd,\mm)\) and
that \(\nabla_\mm f\cdot\nabla_\mm g\in H^{1,2}(X,\sfd,\mm)\) for every
\(f,g\in{\rm Test}(X)\).
\begin{lemma}[Good cut-off functions \cite{AmbrosioMondinoSavare13-2,Mondino-Naber14}]
\label{lem:good_cut-off}
Let \((X,\sfd,\mm)\) be an \(\RCD(K,N)\) space. Let $0<r<R$ and $x\in X$. Then there exists
\(\eta\in{\rm Test}(X)\) such that \(0\leq\eta\leq 1\) on \(X\), the support
of \(\eta\) is compactly contained in $B_R(x)$, and \(\eta=1\) on $B_r(x)$.
\end{lemma}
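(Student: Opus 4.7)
The plan is to produce $\eta$ by smoothing a Lipschitz bump function via the heat semigroup and then composing with a smooth truncation on $\R$. First I would fix intermediate radii $r<r_1<r_2<R$ and a $[0,1]$-valued bounded Lipschitz function $f\colon X\to\R$ with $f\equiv 1$ on $B_{r_1}(x)$ and $\supp(f)\subset B_{r_2}(x)$; such an $f$ exists for purely metric reasons, e.g.\ $f(y)=\max\{0,\min\{1,(r_2-\sfd(x,y))/(r_2-r_1)\}\}$.

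Second, I would apply the heat semigroup $h_t$ associated to the Cheeger energy and set $\varphi_t:=h_t f$. By the standard regularising properties of $h_t$ on $\RCD(K,N)$ spaces, for every $t>0$ one has $0\le\varphi_t\le 1$, $|\nabla_\mm\varphi_t|\in L^\infty(\mm)$ (Bakry--\'Emery gradient estimate), and $\Delta_\mm\varphi_t\in H^{1,2}(X,\sfd,\mm)\cap L^\infty(\mm)$; in particular $\varphi_t\in\Test(X)$.

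Third, I would fix $t$ small enough to separate $\varphi_t$ between values close to $1$ and close to $0$. Using the Gaussian upper and lower bounds for the heat kernel on $\RCD(K,N)$ spaces (due to Sturm, in the sharp form of Jiang--Li--Zhang) together with stochastic completeness, for sufficiently small $t$ one obtains $\varphi_t\ge\tfrac34$ on $B_r(x)$ (because $f\equiv 1$ on $B_{r_1}(x)\supset B_r(x)$ and the heat kernel concentrates near the base point) and $\varphi_t\le\tfrac14$ on $X\setminus B_{R'}(x)$ for some $R'\in(r_2,R)$ (because $\supp f\subset B_{r_2}(x)$ and the Gaussian tail gives quantitative smallness at definite distance from $\supp f$). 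Then I would pick a smooth $\psi\colon\R\to[0,1]$ with $\psi\equiv 0$ on $(-\infty,\tfrac14]$ and $\psi\equiv 1$ on $[\tfrac34,+\infty)$, and set $\eta:=\psi\circ\varphi_t$. By construction $\eta\equiv 1$ on $B_r(x)$ and $\supp(\eta)\subset\overline{B_{R'}(x)}\subset\subset B_R(x)$.

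Finally, $\eta\in\Test(X)$ follows from the chain rule:
\[
|\nabla_\mm\eta|=|\psi'(\varphi_t)|\,|\nabla_\mm\varphi_t|\in L^\infty(\mm),\qquad \Delta_\mm\eta=\psi'(\varphi_t)\,\Delta_\mm\varphi_t+\psi''(\varphi_t)\,|\nabla_\mm\varphi_t|^2,
\]
so $\Delta_\mm\eta\in L^\infty(\mm)$, while membership of $\Delta_\mm\eta$ in $H^{1,2}(X,\sfd,\mm)$ uses that both $\Delta_\mm\varphi_t$ and $\nabla_\mm\varphi_t\cdot\nabla_\mm\varphi_t$ are in $H^{1,2}(X,\sfd,\mm)$ (the latter is a hallmark of test functions recalled above) combined with the composition rule of a smooth function with a bounded Sobolev function. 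The only genuinely nontrivial step is the third one: one needs a quantitative pointwise control of $h_tf$ both inside $B_r(x)$ and outside $B_R(x)$, which is precisely the content of the Gaussian heat-kernel estimates in the $\RCD(K,N)$ setting; once these are invoked, the remaining verifications reduce to chain-rule bookkeeping.
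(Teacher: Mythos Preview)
The paper does not supply its own proof of this lemma; it is simply quoted from \cite{AmbrosioMondinoSavare13-2,Mondino-Naber14}. Your sketch is correct and is essentially the construction carried out in those references: heat-flow mollification of a Lipschitz bump, Gaussian heat-kernel estimates to guarantee that the mollified function stays close to $1$ on $B_r(x)$ and close to $0$ outside a slightly larger ball, then composition with a smooth truncation and a chain-rule verification of membership in $\Test(X)$.

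One small point worth flagging: in Step~2 you assert $\Delta_\mm\varphi_t\in L^\infty(\mm)$ as part of $\varphi_t\in\Test(X)$. This does not follow from the Bakry--\'Emery gradient estimate alone; it requires a pointwise bound on $|\partial_t p_t(x,y)|$ (of the form $\tfrac{C}{t\,\mm(B_{\sqrt t}(x))}\exp(-c\,\sfd(x,y)^2/t)$), which is part of the Jiang--Li--Zhang heat-kernel package you already invoke in Step~3. With that estimate in hand, $\|\Delta_\mm h_t f\|_{L^\infty}\leq C_t\|f\|_{L^\infty}$ follows by integrating against $f$ and using Bishop--Gromov, so your Step~2 is justified.
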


We recall the notion of \emph{Hessian} of a test function \cite{Gigli14}:
given \(f\in{\rm Test}(X)\), we denote by \({\rm Hess}(f)\) the unique element
of the tensor product \(L^2(T^*X)\otimes L^2(T^*X)\)
(cf.\ \cite[Section 1.5]{Gigli14}) such that
\[\begin{split}
&\,2\int h\,{\rm Hess}(\nabla_\mm g_1\otimes\nabla_\mm g_2)\,\d\mm\\
=&-\int\nabla_\mm f\cdot\nabla_\mm g_1\,{\rm div}_\mm(h\nabla_\mm g_2)
+\nabla_\mm f\cdot\nabla_\mm g_2\,{\rm div}_\mm(h\nabla_\mm g_1)
+h\nabla_\mm f\cdot\nabla_\mm(\nabla_\mm g_1\cdot\nabla_\mm g_2)\,\d\mm
\end{split}\]
holds for every \(h,g_1,g_2\in{\rm Test}(X)\). The pointwise norm
\(\big|{\rm Hess}(f)\big|\) of \({\rm Hess}(f)\) belongs to \(L^2(\mm)\).
\medskip

Given an open set \(\Omega\subseteq X\)
and a function \(f\in D(\Omega,\Delta_\mm)\), we say that \(f\) is \emph{harmonic}
if \(\Delta_\mm f=0\). If in addition \(f\) is Lipschitz, then one can define
(the modulus of) its Hessian as follows:
\begin{equation}\label{eq:def_Hess_local}
\big|{\rm Hess}(f)\big|\coloneqq\big|{\rm Hess}(\eta f)\big|\;\;\mm\text{-a.e.\ on }
\{\eta=1\},\quad\text{ for every }\eta\in{\rm Test}(X)
\text{ with }{\rm spt}(\eta)\subseteq\Omega.
\end{equation}
This way we obtain a well-defined function
\(\big|{\rm Hess}(f)\big|\colon\Omega\to[0,+\infty)\),
thanks to the locality property of the Hessian and the fact that
\(\eta f\in{\rm Test}(X)\) for every \(\eta\) as in \eqref{eq:def_Hess_local}.
\subsection{Splitting maps on \texorpdfstring{\(\rm RCD\)}{RCD} spaces}
\label{s:splitting_maps}
In this subsection we collect the main properties of $\delta$-splitting maps that we will use in the sequel. Let us recall that their introduction in the study of spaces with lower Ricci curvature bounds dates back to \cite{Cheeger-Colding96} and that they have been extensively used in \cite{Cheeger-Colding97I,Cheeger-Colding97II,Cheeger-Colding97III} and in more recent works on Ricci limits \cite{CheegerNaber15,CheegerJiangNaber18}. Before the development of the second order calculus in \cite{Gigli14}, there was need for alternative arguments avoiding the use of the Hessian in order to develop the structure theory of $\RCD(K,N)$ spaces in \cite{Mondino-Naber14}. In recent times (see \cite{AHPT18,BPS19}) $\delta$-splitting maps have come into play also in the $\RCD$-theory thanks to \cite{Gigli14} and the stability results of \cite{AmbrosioHonda17,AmbrosioHonda18}.

The results connecting $\delta$-splitting maps with $\eps$-isometries stated below are borrowed from \cite{BPS19}. Although being less local than those provided by the Cheeger-Colding theory, they are sufficient for our purposes and allow for more direct proofs via compactness.

\begin{definition}[Splitting map \cite{BPS19}]\label{def:splitting maps}
	Let \((X,\sfd,\mm)\) be an \(\RCD(-1,N)\) space. Let \(x\in X\) and \(\delta>0\)
	be given. Then a map \(u=(u_1,\ldots,u_k)\colon B_r(x)\to\R^k\) is said to be
	a \emph{\(\delta\)-splitting map} provided:
	\begin{itemize}
		\item[\(\rm i)\)] \(u_a\colon B_r(x)\to\R\) is harmonic and \(C_N\)-Lipschitz
		for every \(a=1,\ldots,k\),
		\item[\(\rm ii)\)] \(r^2\fint_{B_r(x)}\big|{\rm Hess}(u_a)\big|^2\,\d\mm\leq\delta\)
		for every \(a=1,\ldots,k\),
		\item[\(\rm iii)\)] \(\fint_{B_r(x)}|\nabla_\mm u_a\cdot\nabla_\mm u_b
		-\delta_{ab}|\,\d\mm\leq\delta\) for every \(a,b=1,\ldots,k\).
	\end{itemize}
\end{definition}
\begin{proposition}[From GH-isometry to \(\delta\)-splitting \cite{BPS19}]
	\label{prop:GH_to_delta-split}
	Let \(N>1\) be given. Then for any \(\delta>0\) there exists
	\(\eps=\eps_{N,\delta}>0\) such that the following property holds.
	If \((X,\sfd,\mm)\) is an \(\RCD(K,N)\) space, \(x\in X\),
	\(r>0\) with \(r^2|K|\leq\eps\), and there is an
	\(\RCD(0,N-k)\) space \((Z,\sfd_Z,\mm_Z,z)\) such that
	\[
	\sfd_{\rm pmGH}\Big(\big(X,\sfd/r,\mm^r_x,x\big),
	\big(\R^k\times Z,\sfd_{\rm Eucl}\times\sfd_Z,
	\mathcal L^k\otimes\mm_Z,(0^k,z)\big)\Big)\leq\eps,
	\]
	then there exists a \(\delta\)-splitting map \(u\colon B_{5r}(x)\to\R^k\).
\end{proposition}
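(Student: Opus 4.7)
The plan is to argue by contradiction and compactness. Suppose that for some $\delta_0>0$ the conclusion fails: then one can produce a sequence $(X_n,\sfd_n,\mm_n,x_n)$ of pointed $\RCD(K_n,N)$ spaces, scales $r_n>0$ with $r_n^2|K_n|\to 0$, and auxiliary $\RCD(0,N-k)$ spaces $(Z_n,\sfd_{Z_n},\mm_{Z_n},z_n)$ satisfying the pmGH-closeness assumption with errors $\eps_n\to 0$, but such that no $\delta_0$-splitting map exists on $B_{5r_n}(x_n)$. By the scaling property of $\RCD$ spaces one reduces to $r_n=1$ and $K_n\to 0$. Remark \ref{rmk:cptness_RCD} then yields, up to subsequences, pmGH-convergence of $(X_n,\sfd_n,\mm_n,x_n)$ to some $\RCD(0,N)$ space $(X_\infty,\sfd_\infty,\mm_\infty,x_\infty)$ and of $(Z_n,\sfd_{Z_n},\mm_{Z_n},z_n)$ to some $\RCD(0,N-k)$ space $(Z_\infty,\sfd_{Z_\infty},\mm_{Z_\infty},z_\infty)$; by the triangle inequality for $\sfd_{\rm pmGH}$, the limit is isomorphic to $\R^k\times Z_\infty$. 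Denote by $\pi_1,\ldots,\pi_k\colon\R^k\times Z_\infty\to\R$ the coordinate projections onto the Euclidean factor: they are globally harmonic, $1$-Lipschitz, have vanishing Hessian, and satisfy $\nabla_{\mm_\infty}\pi_a\cdot\nabla_{\mm_\infty}\pi_b=\delta_{ab}$.

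Next I would construct harmonic approximants of the $\pi_a$ on $X_n$. Multiplying each $\pi_a$ by a good cut-off from Lemma \ref{lem:good_cut-off}, supported in $B_{10}(x_\infty)$ and equal to $1$ on $B_7(x_\infty)$, produces compactly supported Lipschitz Sobolev functions $f_a$ on $X_\infty$. Via the stability and convergence results of \cite{AmbrosioHonda17,AmbrosioHonda18}, I lift each $f_a$ to $f_a^n\in H^{1,2}(X_n,\sfd_n,\mm_n)$ converging strongly in the variable $H^{1,2}$-sense. Define $u_a^n$ as the harmonic replacement of $f_a^n$ on $B_6(x_n)$, i.e.\ the unique minimizer of the Dirichlet energy subject to $u_a^n-f_a^n\in H^{1,2}_0(B_6(x_n),\sfd_n,\mm_n)$. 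By the standard Cheng--Yau type gradient estimate for harmonic functions on $\RCD(K,N)$ spaces, $u_a^n$ is $C_N$-Lipschitz on $B_5(x_n)$, and by $H^{1,2}$-stability of harmonic functions under pmGH-convergence, $u_a^n\to\pi_a$ strongly in $H^{1,2}_{\rm loc}$.

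It remains to verify the three defining conditions of a $\delta$-splitting map for $n$ large. Condition i) holds by construction. Condition iii) follows from the strong $L^2_{\rm loc}$-convergence $\nabla_{\mm_n}u_a^n\cdot\nabla_{\mm_n}u_b^n\to\delta_{ab}$, combined with a Bishop--Gromov comparison giving uniform two-sided bounds on $\mm_n(B_5(x_n))$. For condition ii), I pick $\phi_n\in\Test(X_n)$ with $0\le\phi_n\le 1$, $\phi_n\equiv 1$ on $B_5(x_n)$ and $\supp\phi_n\Subset B_6(x_n)$ (Lemma \ref{lem:good_cut-off}), chosen so that $\phi_n,\Delta_{\mm_n}\phi_n$ are uniformly bounded and converge to their natural limits on $X_\infty$, and apply the integrated Bochner inequality for the harmonic $u_a^n$:
\[
\int_{B_5(x_n)}\big|{\rm Hess}(u_a^n)\big|^2\d\mm_n\le \frac{1}{2}\int|\nabla_{\mm_n}u_a^n|^2\,\Delta_{\mm_n}\phi_n\,\d\mm_n-K_n\int\phi_n\,|\nabla_{\mm_n}u_a^n|^2\d\mm_n.
\]
Strong $L^2$-convergence $|\nabla_{\mm_n}u_a^n|^2\to|\nabla_{\mm_\infty}\pi_a|^2=1$ makes the first term tend to $\tfrac12\int\Delta_{\mm_\infty}\phi_\infty\,\d\mm_\infty=0$ as $n\to\infty$ (the latter vanishing because $\phi_\infty$ has compact support), while $K_n\to 0$ forces the second term to vanish. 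This contradicts the choice of the counterexample sequence and completes the argument.

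The main obstacle is securing \emph{strong} (rather than merely weak) $L^2$-convergence of $|\nabla_{\mm_n}u_a^n|^2$ to $1$, which is what allows the passage to the limit in the Bochner step. The upgrade from weak to strong convergence rests on the Ambrosio--Honda machinery together with the fact that the limit satisfies $|\nabla_{\mm_\infty}\pi_a|\equiv 1$, so that Dirichlet energies are preserved in the limit. A secondary technical point is the legitimacy of the Hessian term on a ball where $u_a^n$ is only locally harmonic, which is handled through the localization in \eqref{eq:def_Hess_local}.
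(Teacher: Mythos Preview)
The paper does not supply its own proof of this proposition but borrows it from \cite{BPS19}; your contradiction-and-compactness argument via harmonic replacement, the Cheng--Yau gradient bound, and the Ambrosio--Honda stability machinery is precisely the strategy of \cite{BPS19} (as also indicated in the proof of Theorem~\ref{thm:lscdimension} here, where the same construction is invoked). Your sketch is essentially correct, including the Bochner step and the identification of strong $L^2$-convergence of $|\nabla_{\mm_n}u_a^n|^2$ as the key technical point.
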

\begin{proposition}[From \(\delta\)-splitting to GH-isometry \cite{BPS19}]
	\label{prop:delta-split_to_GH}
	Let \(N>1\) be given. Then for any \(\eps>0\) there exists \(\delta=\delta_{N,\eps}>0\)
	such that the following property holds. If \((X,\sfd,\mm)\) is an \(\RCD(K,N)\)
	space, \(x\in X\), and there exists a map \(u\colon B_r(x)\to\R^k\) such that
	\(u\colon B_s(x)\to\R^k\) is a \(\delta\)-splitting map for all \(s<r\), then
	for any \((Y,\varrho,\mathfrak n,y)\in{\rm Tan}_x(X,\sfd,\mm)\) it holds that
	\[
	\sfd_{\rm pmGH}\Big((Y,\varrho,\mathfrak n,y),\big(\R^k\times Z,\sfd_{\rm Eucl}\times\sfd_Z,
	\mathcal L^k\otimes\mm_Z,(0^k,z)\big)\Big)\leq\eps,
	\]
	for some pointed \(\RCD(0,N-k)\) space \((Z,\sfd_Z,\mm_Z,z)\).
\end{proposition}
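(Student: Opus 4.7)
The plan is to proceed by contradiction and compactness, ultimately reducing to Gigli's splitting theorem on \(\RCD(0,N)\) spaces. Suppose the conclusion fails: there exist \(\eps_0>0\), a sequence \(\delta_n\downarrow 0\), \(\RCD(K,N)\) spaces \((X_n,\sfd_n,\mm_n)\) with points \(x_n\), radii \(r_n>0\) and maps \(u_n\colon B_{r_n}(x_n)\to\R^k\) which are \(\delta_n\)-splitting on every \(B_s(x_n)\), \(s<r_n\), together with tangents \((Y_n,\varrho_n,\mathfrak n_n,y_n)\in{\rm Tan}_{x_n}(X_n,\sfd_n,\mm_n)\) remaining at \(\sfd_{\rm pmGH}\)-distance greater than \(\eps_0\) from every product \((\R^k\times Z,\sfd_{\rm Eucl}\times\sfd_Z,\mathcal L^k\otimes\mm_Z,(0^k,z))\) with \(Z\) an \(\RCD(0,N-k)\) space.

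First I would push the splitting maps onto each tangent. Fix \(n\) and choose \(s_n^{(j)}\downarrow 0\) realising \(Y_n\), i.e.\ \((X_n,\sfd_n/s_n^{(j)},\mm^{s_n^{(j)}}_{x_n},x_n)\to(Y_n,\varrho_n,\mathfrak n_n,y_n)\) in pmGH. Setting \(\tilde u_n^{(j)}\coloneqq(u_n-u_n(x_n))/s_n^{(j)}\), the scaling behaviour of gradients, Laplacians and Hessians (each factor of \(s\) cancelling the corresponding factor inherent to the \(\delta\)-splitting definition) gives that for every \(R>0\) the restriction of \(\tilde u_n^{(j)}\) to the ball of radius \(R\) around \(x_n\) in the rescaled metric is again \(\delta_n\)-splitting, as soon as \(Rs_n^{(j)}<r_n\). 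The stability theory for Sobolev calculus on pmGH-converging \(\RCD(K,N)\) spaces of Ambrosio--Honda then allows, via a diagonal extraction in \(j\), to produce a map \(v_n\colon Y_n\to\R^k\) that is \(\delta_n\)-splitting on \(B_R(y_n)\) for every \(R>0\): the Lipschitz bound and the harmonicity pass to the limit, while items (ii) and (iii) survive thanks to lower semicontinuity of the \(L^2\)-Hessian and \(L^1\)-convergence of scalar products of gradients.

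Next I would take a second limit. Each \(Y_n\) is an \(\RCD(0,N)\) space, so by Remark \ref{rmk:cptness_RCD} a subsequence converges in pmGH to some pointed \(\RCD(0,N)\) space \((Y_\infty,\varrho_\infty,\mathfrak n_\infty,y_\infty)\). Applying once more the Ambrosio--Honda stability, the maps \(v_n\) converge to a limit \(v_\infty\colon Y_\infty\to\R^k\); because now \(\delta_n\downarrow 0\), the limit is an honest \(0\)-splitting, namely every component \(v_{\infty,a}\) is Lipschitz and harmonic, \(|{\rm Hess}(v_{\infty,a})|\equiv 0\) and \(\nabla_{\mathfrak n_\infty}v_{\infty,a}\cdot\nabla_{\mathfrak n_\infty}v_{\infty,b}\equiv\delta_{ab}\) on the whole of \(Y_\infty\).

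Finally I would invoke Gigli's splitting theorem. A globally defined harmonic function with vanishing Hessian forces a Euclidean factor to split off isometrically in the category of \(\RCD(0,N)\) spaces; iterating this on the orthonormal system \(v_{\infty,1},\ldots,v_{\infty,k}\) yields an isomorphism \((Y_\infty,\varrho_\infty,\mathfrak n_\infty)\cong(\R^k\times Z,\sfd_{\rm Eucl}\times\sfd_Z,\mathcal L^k\otimes\mm_Z)\) for some pointed \(\RCD(0,N-k)\) space \(Z\). The convergence \(Y_n\to Y_\infty\) then contradicts the standing assumption for \(n\) large. I expect the chief technical hurdle to be the stability of item (ii)---that is, the upper bound on the \(L^2\)-norm of the Hessian---under pmGH-convergence, which hinges on the second order calculus of \cite{Gigli14} combined with the convergence results of \cite{AmbrosioHonda17,AmbrosioHonda18}.
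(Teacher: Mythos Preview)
The paper does not itself prove this proposition; it is quoted from \cite{BPS19} (as Proposition~3.7 there) without argument. Your outline is essentially the proof given in \cite{BPS19}: contradiction plus pmGH-compactness, stability of harmonic Lipschitz functions and lower semicontinuity of the \(L^2\)-Hessian energy under the Ambrosio--Honda convergence theory, and finally the functional form of Gigli's splitting theorem (a global harmonic function with \(|\nabla v|\equiv 1\) on an \(\RCD(0,N)\) space forces an isometric \(\R\)-factor, and one iterates over the orthonormal system). The paper itself hints at this in the proof of Lemma~\ref{lem:GH-isom_all_scales}, where it refers to \cite[Proposition~3.7]{BPS19} for the passage to limit functions with vanishing Hessian and orthonormal gradients.

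One minor streamlining: your two-stage limit (first produce \(v_n\) on each tangent \(Y_n\) via \(j\to\infty\), then let \(n\to\infty\)) can be collapsed into a single diagonal argument. For each \(n\), choose a scale \(s_n\in(0,r_n/n)\) realising \(Y_n\) to within \(1/n\) in \(\sfd_{\rm pmGH}\); then the rescaled spaces \((X_n,\sfd_n/s_n,\mm_n^{s_n},x_n)\) themselves stay \(\eps_0/2\)-far from every \(\R^k\times Z\) for large \(n\), carry \(\delta_n\)-splitting maps on balls of radius \(n\), and one passes to the limit only once. This is how \cite{BPS19} organises the argument and makes the role of the ``\(\delta\)-splitting at all scales \(s<r\)'' hypothesis more transparent.
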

Below we state and prove a result about propagation of the $\delta$-splitting property at many locations with respect to the reference measure and at all scales. The proof is based on a standard maximal function argument.
\begin{proposition}[Propagation of the \(\delta\)-splitting property]
	\label{prop:propag_delta-split}
	Let \(N>1\) be given. Then there exists a constant \(C_N>0\) such that
	the following property holds. If \((X,\sfd,\mm)\) is an \(\RCD(K,N)\)
	space and \(u\colon B_{2r}(p)\to\R^k\) is a
	\(\delta\)-splitting map for some \(p\in X\), \(r>0\) with \(r^2|K|\leq 1\), and
	\(\delta\in(0,1)\), then there exists a Borel set \(G\subseteq B_r(p)\) such that
	\(\mm\big(B_r(p)\setminus G\big)\leq C_N\sqrt\delta\,\mm\big(B_r(p)\big)\) and
	\[
	u\colon B_s(x)\to\R^k\text{ is a }\sqrt\delta\text{-splitting map,}
	\quad\text{ for every }x\in G\text{ and }s\in(0,r).
	\]
\end{proposition}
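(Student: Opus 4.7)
The plan is to reduce both remaining items of the splitting property to the weak-type $(1,1)$ Hardy--Littlewood maximal inequality, which is available on $\RCD(K,N)$ spaces with a doubling constant depending only on $N$ at scale $r$ whenever $r^2|K|\le 1$.

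First, condition i) of Definition \ref{def:splitting maps} is automatic on every subball $B_s(x)\subseteq B_{2r}(p)$ by locality of harmonicity and restriction of the Lipschitz bound. What remains is to produce a large set of centers $x\in B_r(p)$ at which the scale-invariant Hessian bound ii) and the near-orthonormality bound iii) hold uniformly in $s\in(0,r)$.

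Set $H_a:=|\Hess(u_a)|^2$ and $g_{ab}:=|\nabla_\mm u_a\cdot\nabla_\mm u_b-\delta_{ab}|$. The $\delta$-splitting hypothesis on $B_{2r}(p)$ gives the $L^1$ bounds
\[
\int_{B_{2r}(p)}H_a\,\d\mm\le\frac{\delta}{4r^2}\mm(B_{2r}(p)),\qquad \int_{B_{2r}(p)}g_{ab}\,\d\mm\le\delta\,\mm(B_{2r}(p)).
\]
Introduce the localised maximal functions $\mathcal M H_a(x):=\sup_{0<s<r}\fint_{B_s(x)}H_a\,\d\mm$ and $\mathcal M g_{ab}(x):=\sup_{0<s<r}\fint_{B_s(x)}g_{ab}\,\d\mm$ for $x\in B_r(p)$, which are well defined since $B_s(x)\subseteq B_{2r}(p)$. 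The weak $(1,1)$ estimate, combined with Bishop--Gromov to replace $\mm(B_{2r}(p))$ by $\mm(B_r(p))$, then yields
\[
\mm\bigl(\{\mathcal M H_a>\sqrt\delta/r^2\}\cap B_r(p)\bigr)+\mm\bigl(\{\mathcal M g_{ab}>\sqrt\delta\}\cap B_r(p)\bigr)\le C_N\sqrt\delta\,\mm(B_r(p)).
\]

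Let $G\subseteq B_r(p)$ be the intersection, over $a,b=1,\dots,k$, of the complements of these exceptional sets. On $G$, for every $s\in(0,r)$,
\[
s^2\fint_{B_s(x)}H_a\,\d\mm\le r^2\mathcal M H_a(x)\le\sqrt\delta,\qquad \fint_{B_s(x)}g_{ab}\,\d\mm\le\mathcal M g_{ab}(x)\le\sqrt\delta,
\]
so $u\colon B_s(x)\to\R^k$ is $\sqrt\delta$-splitting. A union bound over the $k+k^2$ exceptional sets closes the argument; since a $\delta$-splitting map forces $k\le N$ via Proposition \ref{prop:delta-split_to_GH}, the resulting constant depends only on $N$. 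The one genuinely useful observation is the estimate $s^2\fint_{B_s(x)}H_a\,\d\mm\le r^2\mathcal M H_a(x)$, which converts the scale-invariant condition ii) into a plain maximal-function problem with $L^1$ data of size $\delta$; after that, the characteristic $\sqrt\delta$ loss is just Chebyshev applied to the maximal inequality. I do not foresee any deeper obstacle.
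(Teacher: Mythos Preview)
Your proposal is correct and follows essentially the same route as the paper: both arguments control the maximal functions of \(|\Hess(u_a)|^2\) and \(|\nabla_\mm u_a\cdot\nabla_\mm u_b-\delta_{ab}|\) at threshold \(\sqrt\delta\), the paper simply spelling out the underlying Vitali covering argument explicitly (after scaling to \(r=1\)) where you invoke the weak-\((1,1)\) maximal inequality as a black box. Your appeal to Proposition~\ref{prop:delta-split_to_GH} to force \(k\leq N\) is slightly off---that proposition requires \(\delta\)-splitting at \emph{all} scales, not just one---but the paper likewise absorbs the \(k\)-dependence of the union bound into \(C_N\) without comment, so this is not a gap peculiar to your write-up.
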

\begin{proof}
	Thanks to a scaling argument, it is sufficient to prove the claim for \(r=1\) and \(|K|\leq 1\). Let us define \(G\subseteq B_1(p)\)
	as \(G\coloneqq\bigcap_{a=1}^k G_a\cap\bigcap_{a,b=1}^k G_{a,b}\), where we set
	\[\begin{split}
	G_a&\coloneqq\bigg\{x\in B_1(p)\;\bigg|\;\sup_{s\in(0,1)}
	\fint_{B_s(x)}\big|{\rm Hess}(u_a)\big|^2\,\d\mm\leq\sqrt\delta\bigg\},\\
	G_{a,b}&\coloneqq\bigg\{x\in B_1(p)\;\bigg|\;\sup_{s\in(0,1)}
	\fint_{B_s(x)}\big|\nabla_\mm u_a\cdot\nabla_\mm u_b-\delta_{ab}\big|\,\d\mm
	\leq\sqrt\delta\bigg\}.
	\end{split}\]
	It holds that \(u\colon B_s(x)\to\R^k\) is a \(\sqrt\delta\)-splitting
	map for all \(x\in G\) and \(s\in(0,1)\). To prove the claim,
	it remains to show that \(\mm\big(B_1(p)\setminus G_a\big),
	\mm\big(B_1(p)\setminus G_{a,b}\big)\leq C_N\sqrt\delta\,\mm\big(B_1(p)\big)\)
	for all \(a,b=1,\ldots,k\).
	
	Given any \(x\in B_1(p)\setminus G_a\), we can choose \(s_x\in(0,1)\)
	such that \(\fint_{B_{s_x}(x)}\big|{\rm Hess}(u_a)\big|^2\,\d\mm>\sqrt\delta\).
	By Vitali covering lemma, we can find a sequence
	\((x_i)_i\subseteq B_1(p)\setminus G_a\) such that
	\(\big\{B_{s_{x_i}}(x_i)\big\}_i\) are pairwise disjoint and
	\(B_1(p)\setminus G_a\subseteq\bigcup_i B_{5s_{x_i}}(x_i)\).
	Therefore, it holds that
	\[\begin{split}
	\mm\big(B_1(p)\setminus G_a\big)&\leq\sum_{i\in\N}\mm\big(B_{5s_{x_i}}(x_i)\big)
	\leq C_N\sum_{i\in\N}\mm\big(B_{s_{x_i}}(x_i)\big)\leq
	\frac{C_N}{\sqrt\delta}\sum_{i\in\N}\int_{B_{s_{x_i}}(x_i)}
	\big|{\rm Hess}(u_a)\big|^2\,\d\mm\\
	&\leq\frac{C_N\mm\big(B_2(p)\big)}{\sqrt\delta}
	\fint_{B_2(p)}\big|{\rm Hess}(u_a)\big|^2\,\d\mm
	\leq C_N\sqrt\delta\,\mm\big(B_1(p)\big),
	\end{split}\]
	where we used the doubling property of \(\mm\), the defining property of \(s_{x_i}\)
	and the fact that \(u\) is a \(\delta\)-splitting map on \(B_2(p)\).
	An analogous argument shows that \(\mm\big(B_1(p)\setminus G_{a,b}\big)\leq
	C_N\sqrt\delta\,\mm\big(B_1(p)\big)\) for all \(a,b=1,\ldots,k\),
	thus the statement is achieved.
\end{proof}
\section{Structure theory for \texorpdfstring{\(\rm RCD\)}{RCD} spaces}
Given a pointed \(\RCD(K,N)\) space \((X,\sfd,\mm,x)\) and a radius \(r\in(0,1)\),
we define the normalised measure \(\mm_r^x\) on \(X\) as
\[
\mm_r^x\coloneqq\frac{\mm}{C(x,r)},\quad\text{ where }
C(x,r)\coloneqq\int_{B_r(x)}\left(1-\frac{\sfd(\cdot,x)}{r}\right)\,\d\mm.
\]
The \emph{tangent cone} \({\rm Tan}_x(X,\sfd,\mm)\)
is defined as the family of all those spaces \((Y,\varrho,\mathfrak n,y)\) such that
\[
\lim_{n\to\infty}\sfd_{\rm pmGH}\big((X,\sfd/r_n,\mm_{r_n}^x,x),
(Y,\varrho,\mathfrak n,y)\big)=0
\]
for some sequence \((r_n)_n\subseteq(0,1)\) of radii with \(r_n\searrow 0\).
It follows from the scaling property of the \(\RCD\) condition and Remark
\ref{rmk:cptness_RCD} that any element of \({\rm Tan}_x(X,\sfd,\mm)\) is
a pointed \(\RCD(0,N)\) space.
Let us briefly recall the properties that we take as a starting point for our analysis of the structure theory of $\RCD(K,N)$ spaces. The first one is a version of the \textit{iterated tangent property} suited for this setting. Building upon this, in \cite{Gigli-Mondino-Rajala15} it was proved that at almost every point there exists at least one Euclidean space in the tangent cone, on $\RCD(K,N)$ spaces (see Theorem \ref{thm:Eucl_tgs} below).
\begin{theorem}[Iterated tangent property \cite{Gigli-Mondino-Rajala15}]\label{thm:iterated_tg}
	Let \((X,\sfd,\mm)\) be an \(\RCD(K,N)\) space. Then for
	\(\mm\)-a.e.\ point \(x\in X\) it holds that
	\[
	{\rm Tan}_z(Y,\varrho,\mathfrak n)\subseteq{\rm Tan}_x(X,\sfd,\mm)
	\quad\text{ for every }(Y,\varrho,\mathfrak n,y)\in{\rm Tan}_x(X,\sfd,\mm)
	\text{ and }z\in Y.
	\]
\end{theorem}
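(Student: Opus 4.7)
The plan is to follow a Preiss-style argument adapted to the \(\RCD\) setting, combining separability of the pmGH space of pointed \(\RCD(0,N)\) spaces, a basepoint-shift lemma, and a density-type argument to transfer information across basepoints.

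\textbf{Reduction via separability.} The pmGH space of isomorphism classes of pointed \(\RCD(0,N)\) spaces is separable (this follows from the existence of the metric \(\sfd_{\rm pmGH}\)). I would fix a countable pmGH-dense family \(\{(\mathcal Y_j,\star_j)\}_{j\in\N}\) and rationals \(\eps>0\). Since \(\Tan_x(X,\sfd,\mm)\) is closed in pmGH, the desired inclusion is equivalent to a countable family of conditions of the form ``every element of \(\Tan_z(Y,\varrho,\mathfrak n)\) lying \(\eps\)-close to some \(\mathcal Y_j\) is \(2\eps\)-close to some element of \(\Tan_x\)''. It therefore suffices to exhibit, for each such pair \((j,\eps)\), a full-measure set on which the condition holds.

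\textbf{Basepoint-shift lemma.} The first technical ingredient is as follows. Suppose \((Y,\varrho,\mathfrak n,y)\in\Tan_x(X,\sfd,\mm)\) is realised by scales \(r_n\searrow 0\), i.e.\ \((X,\sfd/r_n,\mm^x_{r_n},x)\to(Y,\varrho,\mathfrak n,y)\) in pmGH. For any \(z\in Y\), by the very definition of pmGH convergence one can extract points \(x_n\in X\) with \(\sfd(x_n,x)/r_n\to\varrho(y,z)\) that track \(z\) through the approximate isometries. The stability of the \(\RCD(K,N)\) condition together with the uniform doubling of \(\mm\) implies that the ratio \(C(x,r_n)/C(x_n,r_n)\) converges to a positive constant, so that
\[
(X,\sfd/r_n,\mm^{x_n}_{r_n},x_n)\to(Y,\varrho,\tilde{\mathfrak n},z)
\]
for a positive multiple \(\tilde{\mathfrak n}\) of \(\mathfrak n\). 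Because the normalising constant \(C(z,s)\) rescales with the same factor, \(\Tan_z(Y,\varrho,\tilde{\mathfrak n})=\Tan_z(Y,\varrho,\mathfrak n)\).

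\textbf{Diagonal construction of the iterated tangent.} Given \((W,\omega,\mathfrak p,w)\in\Tan_z(Y,\varrho,\mathfrak n)\), realised via scales \(s_m\searrow 0\), I would combine the basepoint-shift with a diagonal extraction: choose \(m_n\to\infty\) sufficiently slowly so that
\[
(X,\sfd/(r_n s_{m_n}),\mm^{x_n}_{r_n s_{m_n}},x_n)\longrightarrow(W,\omega,\mathfrak p,w)
\]
in pmGH. At this stage \((W,w)\) is realised as a pmGH-limit of rescalings of \((X,\sfd,\mm)\) along basepoints \(x_n\to x\) and vanishing scales.

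\textbf{Transfer to the fixed basepoint \(x\) (the main obstacle).} The limit above is taken at shifting basepoints \(x_n\), not at the prescribed \(x\); since \(\sfd(x,x_n)/(r_n s_{m_n})\to\infty\), the point \(x\) sits ``at infinity'' in the rescaling centred at \(x_n\), so one cannot directly read off a tangent at \(x\). This is the crux of the argument. I would resolve it by a Vitali/Lebesgue-density argument: denote by \(E_{j,\eps}\) the (measurable) set of bad points for the pair \((j,\eps)\) and suppose for contradiction that \(\mm(E_{j,\eps})>0\). Choose \(x\) a point of \(\mm\)-density one of \(E_{j,\eps}\); by the doubling property of \(\mm\), the constructed basepoints \(x_n\) fall in \(E_{j,\eps}\) along a cofinal subsequence. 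The argument above then exhibits \((W,w)\) as a pmGH-limit of rescalings at points of \(E_{j,\eps}\) converging to \(x\), and a standard maximal-function type estimate combined with the definition of \(E_{j,\eps}\) would force the property in question to hold at \(x\), contradicting \(x\in E_{j,\eps}\). Assembling the resulting null sets over the countable family \((j,\eps)\) yields the statement.

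The main difficulty is the final density step: bridging from ``iterated tangent is a genuine tangent at basepoints \(x_n\to x\)'' to ``iterated tangent is a genuine tangent at \(x\)'' crucially exploits the doubling property and the upper semicontinuity, in a measurable sense, of the multivalued map \(x\mapsto\Tan_x(X,\sfd,\mm)\).
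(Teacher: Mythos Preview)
The paper itself does not prove this theorem: it is stated with a citation to \cite{Gigli-Mondino-Rajala15} and used as a black box. Your overall strategy---separability reduction, basepoint shift, and a Preiss-type density argument---is indeed the route taken in that reference (going back to Preiss and Le Donne). So there is nothing to compare against in the present paper.

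That said, your sketch of Step~4 contains a genuine gap. After the diagonal construction you have \((X,\sfd/t_n,\mm^{x_n}_{t_n},x_n)\to(W,w)\) with \(t_n=r_n s_{m_n}\) and \(\sfd(x_n,x)\sim \varrho(y,z)\,r_n\), so that \(\sfd(x_n,x)/t_n\sim \varrho(y,z)/s_{m_n}\to\infty\). The claim that ``by doubling the basepoints \(x_n\) fall in \(E_{j,\eps}\) along a cofinal subsequence'' is unjustified: density of \(E_{j,\eps}\) at \(x\) controls the measure of \(E_{j,\eps}\) in balls of radius comparable to \(\sfd(x_n,x)\sim r_n\), not in balls of the much smaller radius \(t_n\). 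Concretely, doubling gives \(\mm\big(B_{t_n}(x_n)\big)\gtrsim (t_n/r_n)^N\,\mm\big(B_{Cr_n}(x)\big)\), and the factor \((r_n/t_n)^N\to\infty\) swamps the \(o(1)\) density error, so you cannot conclude \(E_{j,\eps}\cap B_{o(t_n)}(x_n)\neq\emptyset\). Perturbing \(x_n\) to some \(x_n'\in E_{j,\eps}\) at distance \(o(r_n)\) is possible, but then \(\sfd(x_n,x_n')/t_n\) need not tend to \(0\), and the rescalings at \(x_n'\) at scale \(t_n\) need not converge to \(W\).

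The standard fix (and what \cite{Gigli-Mondino-Rajala15} does) is to swap the order of your Steps~3 and~4. First prove, via the density argument, the \emph{basepoint-shift-within-tangent} lemma: for \(\mm\)-a.e.\ \(x\), whenever \((Y,\varrho,\mathfrak n,y)\in\Tan_x(X,\sfd,\mm)\) and \(z\in Y\), then \((Y,\varrho,c\,\mathfrak n,z)\in\Tan_x(X,\sfd,\mm)\) as well. Here the density argument succeeds because one works at the single scale \(r_n\), where \(\sfd(x_n,x)/r_n\to\varrho(y,z)<\infty\); doubling then gives \(E_{j,\eps}\cap B_{o(r_n)}(x_n)\neq\emptyset\) and the contradiction goes through. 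Once this lemma is in hand, the iterated tangent follows by a direct diagonal argument \emph{at the fixed basepoint} \(x\): realise \((Y,z)\) as \(\lim(X,\sfd/r_n,\mm^x_{r_n},x)\), realise \((W,w)\) as \(\lim(Y,\varrho/s_i,\ldots,z)\), and diagonalise to obtain \((W,w)=\lim(X,\sfd/(r_n s_{i_n}),\mm^x_{r_n s_{i_n}},x)\in\Tan_x(X,\sfd,\mm)\).
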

\begin{theorem}[Euclidean tangents to \(\RCD\) spaces \cite{Gigli-Mondino-Rajala15}]
	\label{thm:Eucl_tgs}
	Let \((X,\sfd,\mm)\) be an \(\RCD(K,N)\) space. Then for \(\mm\)-a.e.\ point
	\(x\in X\) there exists \(k(x)\in\N\) with \(k(x)\leq N\) such that
	\[
	\big(\R^{k(x)},\sfd_{\rm Eucl},c_{k(x)}\mathcal L^{k(x)},0^{k(x)}\big)
	\in{\rm Tan}_x(X,\sfd,\mm),
	\]
	where we set \(c_k\coloneqq\mathcal L^k\big(B_1(0^k)\big)/(k+1)\) for every \(k\in\N\).
\end{theorem}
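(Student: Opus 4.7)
The plan is to combine Theorem \ref{thm:iterated_tg} with two cornerstone results of the RCD theory: the splitting theorem of Gigli (the RCD analogue of Cheeger--Gromoll) and the volume-cone-implies-metric-cone rigidity of De Philippis--Gigli. Throughout, fix a point $x$ in the full $\mm$-measure set on which Theorem \ref{thm:iterated_tg} holds; by the scaling property of $\RCD$ and by Remark \ref{rmk:cptness_RCD}, $\Tan_x(X,\sfd,\mm)$ is non-empty and consists entirely of pointed $\RCD(0,N)$ spaces.

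Introduce the invariant $k(x)$ as the largest non-negative integer $j$ for which there exists an element of $\Tan_x(X,\sfd,\mm)$ isomorphic (as a pointed metric measure space) to
\[
\bigl(\R^j \times Z,\; \sfd_{\mathrm{Eucl}} \times \sfd_Z,\; c_j\Leb^j \otimes \mathfrak n_Z,\; (0^j, z)\bigr),
\]
for some pointed $\RCD(0, N-j)$ space $(Z, \sfd_Z, \mathfrak n_Z, z)$. The set of such $j$ contains $0$ (the trivial splitting, using any tangent) and is bounded above by $N$, so $k(x)$ is well defined and satisfies $k(x)\leq N$.

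The goal reduces to showing that this extremal $k = k(x)$ can be realized with $Z$ a one-point space, which yields exactly the desired Euclidean tangent. Suppose, towards a contradiction, that every maximizer has a non-trivial $Z$-factor. Fix such a $Y = \R^k \times Z$ with basepoint $(0^k, z)$ and pick $z' \in Z$ with $z' \neq z$. By the iterated tangent property $\Tan_{(0^k, z')}(Y) \subseteq \Tan_x(X,\sfd,\mm)$. A blow-up at $(0^k, z')$ preserves the self-similar Euclidean $\R^k$ factor and produces, in the complementary factor, a tangent of $Z$ at $z'$. By the volume-cone-implies-metric-cone theorem of De Philippis--Gigli, applicable along a suitable scale sequence since the asymptotic volume density exists $\mathfrak n_Z$-a.e.\ by Bishop--Gromov, we may choose this tangent to be a metric cone $C(W)$ with tip $o$. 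Hence $\R^k \times C(W) \in \Tan_x(X,\sfd,\mm)$. Since the original tangent was not Euclidean, $C(W)$ is not a single point, so it contains a geodesic ray from $o$; taking a further tangent at a non-tip point of this ray lifts that ray to a full line, and Gigli's splitting theorem then produces an additional Euclidean factor. Combined with the preserved $\R^k$, we obtain an element of $\Tan_x(X,\sfd,\mm)$ that splits as $\R^{k+1} \times Z''$ for some pointed $\RCD(0, N-k-1)$ space, contradicting the maximality of $k$.

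Once $Z$ is reduced to a one-point space, the conclusion follows, with the constant $c_{k(x)}$ identified from the rescaling convention in the definition of $\mm_r^x$. The main obstacle is the ``extract an extra $\R$-factor'' step: pinning down a location $z'$ and a scale sequence at which the metric cone structure emerges, and then upgrading the ray through the tip into a line by a second blow-up so that Gigli's splitting theorem applies, all while keeping track of the measure normalization. These ingredients are individually standard in the RCD literature, but their careful chaining, combined with the iterated tangent property to stay inside $\Tan_x(X,\sfd,\mm)$, is the technical heart of the argument.
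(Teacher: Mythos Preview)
The paper does not prove Theorem~\ref{thm:Eucl_tgs}: it is quoted verbatim from \cite{Gigli-Mondino-Rajala15} as one of the two starting ingredients (together with Theorem~\ref{thm:iterated_tg}) for the structure theory developed afterwards. There is therefore no ``paper's own proof'' to compare against.

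As for your sketch on its own merits, the overall strategy---maximise the number $k$ of Euclidean factors among tangents, then use iterated tangents and splitting to push $k$ up if the residual factor is nontrivial---is indeed the one used in \cite{Gigli-Mondino-Rajala15}. However, two points deserve comment. First, the sentence ``Since the original tangent was not Euclidean, $C(W)$ is not a single point'' is a non sequitur: the fact that $Z$ has more than one point says nothing about a tangent of $Z$ at $z'$. What actually happens is a dichotomy: if $C(W)$ is a point, then $\R^k\times C(W)\cong\R^k$ lies in $\Tan_x(X)$ and you are already done (contradicting the standing hypothesis that every maximiser has nontrivial $Z$), while if $C(W)$ is not a point you proceed as you wrote. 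You should state it that way. Second, your route through the De Philippis--Gigli volume-cone/metric-cone rigidity is heavier than what \cite{Gigli-Mondino-Rajala15} actually does and risks circularity in the literature chronology. Their argument produces a line in a tangent directly: at $\mm$-a.e.\ point one finds (via disintegration of optimal plans/geodesics and Lebesgue points) a geodesic through the point that persists as a full line under blow-up, and then Gigli's splitting theorem gives the extra $\R$-factor; no cone rigidity is invoked. Your detour through metric cones and a second blow-up at a non-tip point can be made to work, but it is a longer path to the same destination.
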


\subsection{Uniqueness of tangent cones}\label{subsec:uniq}
Let \((X,\sfd,\mm)\) be an \(\RCD(K,N)\) space. Then we define
\[
\mathcal R_k\coloneqq\Big\{x\in X\;\Big|\;{\rm Tan}_x(X,\sfd,\mm)=
\big\{(\R^k,\sfd_{\rm Eucl},c_k\mathcal L^k,0^k)\big\}\Big\}
\quad\text{ for every }k\in\N\text{ with }k\leq N.
\]
With a terminology borrowed from \cite{Cheeger-Colding97I} and inspired by geometric measure theory, points in \(\mathcal R_k\) are called \emph{\(k\)-regular} points
of \(X\). Moreover, given any point \(x\in X\) and any \(k\in\N\), we say that an element
\((Y,\varrho,\mathfrak n,y)\in{\rm Tan}_x(X,\sfd,\mm)\) \emph{splits off a factor \(\R^k\)}
provided
\[
(Y,\varrho,\mathfrak n,y)\cong\big(\R^k\times Z,\sfd_{\rm Eucl}\times\sfd_Z,
\mathcal L^k\otimes\mm_Z,(0^k,z)\big),
\]
for some pointed \(\RCD(0,N-k)\) space \((Z,\sfd_Z,\mm_Z,z)\).

In \cite{Mondino-Naber14} uniqueness of tangents (almost everywhere w.r.t. the reference measure $\mm$) was proved together with rectifiability relying on a new $\delta$-splitting via excess theorem (cf. \cite[Theorem 6.7]{Mondino-Naber14} and \cite[Theorem 5.1]{Mondino-Naber14}). Below we provide a new proof of uniqueness of tangents based on the same principle about propagation of regularity but more similar to the one given in \cite{Cheeger-Colding97I} for Ricci limits.

\begin{theorem}[Uniqueness of tangents]\label{thm:uniq_tg}
Let \((X,\sfd,\mm)\) be an \(\RCD(K,N)\) space. Then it holds
\[
\mm\bigg(X\setminus\bigcup_{k\leq N}\mathcal R_k\bigg)=0.
\]
\end{theorem}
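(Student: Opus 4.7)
The plan is to promote the ``existence of a Euclidean tangent'' statement of Theorem~\ref{thm:Eucl_tgs} to ``uniqueness of the tangent, which is Euclidean''. For each $k\in\{0,\ldots,\lfloor N\rfloor\}$, let
\[
\tilde{\mathcal A}_k\coloneqq\big\{x\in X : (\R^k,\sfd_{\rm Eucl},c_k\mathcal L^k,0^k)\in\Tan_x(X,\sfd,\mm)\big\},
\]
and, for every $\eps>0$, let $\mathcal B_k^\eps$ be the set of $y\in X$ such that every element of $\Tan_y(X,\sfd,\mm)$ is within pmGH-distance $\eps$ from some product $\R^k\times Z$, with $Z$ a pointed $\RCD(0,N-k)$ space. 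Theorem~\ref{thm:Eucl_tgs} gives that $\bigcup_k\tilde{\mathcal A}_k$ is $\mm$-conull, so I would organise the proof in two steps: (i)~show $\tilde{\mathcal A}_k\subseteq\bigcap_{m\ge 1}\mathcal B_k^{1/m}$ up to $\mm$-null sets; (ii)~set $k^*(x)\coloneqq\max\{k:x\in\tilde{\mathcal A}_k\}$ and use Theorem~\ref{thm:iterated_tg} together with the maximality of $k^*(x)$ to collapse the residual factor $Z$ to a single point.

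For step~(i), fix $\eps>0$, let $\delta=\delta_{N,\eps}$ come from Proposition~\ref{prop:delta-split_to_GH}, pick $\eta\in(0,1)$ with $\sqrt\eta\le\delta$, and let $\rho=\eps_{N,\eta}$ from Proposition~\ref{prop:GH_to_delta-split}. For $x\in\tilde{\mathcal A}_k$, along a sequence $r_n\searrow 0$ with $r_n^2|K|\le\rho$ on which the rescaled normalised space is $\rho$-pmGH-close to $(\R^k,\sfd_{\rm Eucl},c_k\mathcal L^k,0^k)$, Proposition~\ref{prop:GH_to_delta-split} produces an $\eta$-splitting map $u^n\colon B_{5r_n}(x)\to\R^k$. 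Applying Proposition~\ref{prop:propag_delta-split} with $p=x$ and radius $5r_n/2$ yields a Borel set $G_n\subseteq B_{5r_n/2}(x)$ of relative $\mm$-measure at least $1-C_N\sqrt\eta$ on which $u^n|_{B_s(y)}$ is $\sqrt\eta$-splitting for every $s\in(0,5r_n/2)$. Since $\sqrt\eta\le\delta$, Proposition~\ref{prop:delta-split_to_GH} forces $G_n\subseteq\mathcal B_k^\eps$. Thus each $x\in\tilde{\mathcal A}_k$ has arbitrarily small concentric balls in which a Borel inner approximation of $\mathcal B_k^\eps$ occupies relative $\mm$-measure at least $1-C_N\sqrt\eta>0$; Lebesgue differentiation in the doubling space $(X,\sfd,\mm)$ then gives $\mm(\tilde{\mathcal A}_k\setminus\mathcal B_k^\eps)=0$. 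Intersecting over $\eps=1/m$ completes step~(i).

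For step~(ii), restrict to the full-measure set where Theorem~\ref{thm:iterated_tg} applies and where step~(i) holds for every $k$ with $x\in\tilde{\mathcal A}_k$. Writing $k^*=k^*(x)$, every $(Y,\varrho,\mathfrak n,y)\in\Tan_x(X,\sfd,\mm)$ is isomorphic to $(\R^{k^*}\times Z,\sfd_{\rm Eucl}\times\sfd_Z,\mathcal L^{k^*}\otimes\mm_Z,(0^{k^*},z))$ for some pointed $\RCD(0,N-k^*)$ space $Z$. Applying Theorem~\ref{thm:Eucl_tgs} inside $Z$, for $\mm_Z$-a.e.\ $w\in Z$ some $\R^{j(w)}$ with $j(w)\le N-k^*$ belongs to $\Tan_w(Z,\sfd_Z,\mm_Z)$. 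By the product structure of rescalings, $\R^{k^*+j(w)}\in\Tan_{(0^{k^*},w)}(\R^{k^*}\times Z)$, which via Theorem~\ref{thm:iterated_tg} lies in $\Tan_x(X,\sfd,\mm)$; maximality of $k^*$ then forces $j(w)=0$. But a geodesic space containing more than one point cannot pmGH-converge under any rescaling to the one-point space with unit Dirac mass, since a rescaled ball $B_{R r_n}(w)$ has diameter $\ge R$ in the rescaled metric for $n$ large. Hence $Z=\{w\}$, every tangent at $x$ has $\R^{k^*}$ as its underlying metric space, and the normalisation built into $\mm_r^x$ forces the limit measure to be $c_{k^*}\mathcal L^{k^*}$, giving $x\in\mathcal R_{k^*(x)}$.

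The main obstacle I anticipate is the Lebesgue-density step: the set $\mathcal B_k^\eps$ is defined through a condition on the \emph{whole} tangent cone and its measurability is not manifest, so one should work with an explicit Borel inner approximation (e.g.\ the countable union of the $G_n$'s along rational radii where the pmGH-closeness of the rescaled space to $\R^k$ is witnessed) and verify that it still carries the required lower $\mm$-density at each point of $\tilde{\mathcal A}_k$. A secondary technical point is the rigorous verification of the product rule for tangents used in step~(ii), namely that every element of $\Tan_{(0^{k^*},w)}(\R^{k^*}\times Z)$ splits as $\R^{k^*}\times(\,\text{a tangent of }Z\text{ at }w\,)$, which follows from the definition but deserves a short verification.
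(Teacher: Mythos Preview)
Your proposal is correct and follows the same strategy as the paper, combining the propagation of the $\delta$-splitting property (Propositions~\ref{prop:GH_to_delta-split}--\ref{prop:propag_delta-split}) with the iterated-tangent/Euclidean-tangent argument (Theorems~\ref{thm:iterated_tg}, \ref{thm:Eucl_tgs}); the paper merely performs your two steps in the opposite order---first collapsing $A'_k$ to $A_k$ via iterated tangents, then $A_k$ to $\mathcal R_k$ via $\delta$-splitting maps---and in the propagation step replaces your Lebesgue-density argument by an explicit Vitali covering of $A_k\cap B_R(p)$, building the Borel good set $G$ directly as a countable intersection of unions of the $G^i_\eta$, which sidesteps the measurability issue with $\mathcal B_k^\eps$ that you correctly flag. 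One small point you glossed over in step~(ii): passing from ``every tangent is $1/m$-close to some $\R^{k^*}\times Z_m$'' to ``every tangent \emph{is} $\R^{k^*}\times Z$'' requires extracting a pmGH-convergent subsequence of the $Z_m$ via Remark~\ref{rmk:cptness_RCD}, which the paper carries out explicitly after~\eqref{eq:uniq_tg_aux3}.
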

\begin{proof}
\textbf{Step 1.}
Fix any \(k\in\N\) with \(k\leq N\). We define the auxiliary sets
\(A_k,A_k'\subseteq X\) as follows:
\begin{itemize}
\item[\(\rm i)\)] \(A_k\) is the set of all points \(x\in X\) such that
\((\R^k,\sfd_{\rm Eucl},c_k\mathcal L^k,0^k)\in{\rm Tan}_x(X,\sfd,\mm)\),
but no other element of \({\rm Tan}_x(X,\sfd,\mm)\) splits off a factor \(\R^k\).
\item[\(\rm ii)\)] \(A'_k\) is the set of all points \(x\in X\) which satisfy
\((\R^k,\sfd_{\rm Eucl},c_k\mathcal L^k,0^k)\in{\rm Tan}_x(X,\sfd,\mm)\)
and \((\R^\ell,\sfd_{\rm Eucl},c_\ell\mathcal L^\ell,0^\ell)\notin{\rm Tan}_x(X,\sfd,\mm)\)
for every \(\ell\in\N\) with \(\ell>k\).
\end{itemize}
Observe that \(\mathcal R_k\subseteq A_k\subseteq A'_k\). The \(\mm\)-measurability
of the sets \(\mathcal R_k,A_k,A'_k\) can be proven adapting the proof
of \cite[Lemma 6.1]{Mondino-Naber14}. It also follows from Theorem \ref{thm:Eucl_tgs}
that \(\mm\big(X\setminus\bigcup_{k\leq N}A'_k\big)=0\).\\
\textbf{Step 2.} We aim to prove that \(\mm(A'_k\setminus A_k)=0\).
We argue by contradiction: suppose \(\mm(A'_k\setminus A_k)>0\).
Then we can find \(x\in A'_k\setminus A_k\) where the iterated tangent property
of Theorem \ref{thm:iterated_tg} holds. Since \(x\notin A_k\), there exists a pointed
\(\RCD(0,N-k)\) space \((Y,\varrho,\mathfrak n,y)\) with \({\rm diam}(Y)>0\) such that
\[
\big(\R^k\times Y,\sfd_{\rm Eucl}\times\varrho,
\mathcal L^k\otimes\mathfrak n,(0^k,y)\big)\in{\rm Tan}_x(X,\sfd,\mm).
\]
Theorem \ref{thm:Eucl_tgs} yields the existence of a point \(z\in Y\)
such that \((\R^\ell,\sfd_{\rm Eucl},c_\ell\mathcal L^\ell,0^\ell)
\in{\rm Tan}_z(Y,\varrho,\mathfrak n)\), for some \(\ell\in\N\) with
\(0<\ell\leq N-k\). This implies that
\[
(\R^{k+\ell},\sfd_{\rm Eucl},c_{k+\ell}\mathcal L^{k+\ell},0^{k+\ell})
\in{\rm Tan}_{(0^k,z)}(\R^k\times Y,\sfd_{\rm Eucl}\times\varrho,
\mathcal L^k\otimes\mathfrak n).
\]
Therefore, Theorem \ref{thm:iterated_tg} guarantees that
\((\R^{k+\ell},\sfd_{\rm Eucl},c_{k+\ell}\mathcal L^{k+\ell},0^{k+\ell})\)
belongs to \({\rm Tan}_x(X,\sfd,\mm)\), which contradicts the fact that \(x\in A'_k\).
Consequently, we have proven that \(\mm(A'_k\setminus A_k)=0\), as desired.\\
\textbf{Step 3.} In order to complete the proof of the statement,
it suffices to show that
\begin{equation}\label{eq:uniq_tg_cl}
\mm\big(B_R(p)\cap(A_k\setminus\mathcal R_k)\big)=0
\quad\text{ for every }p\in X\text{ and }R>0.
\end{equation}
Let \(p\in X\) and \(R,\eta>0\) be fixed.
Choose any \(\delta\in(0,\eta)\) associated with \(\eta\) as in Proposition
\ref{prop:delta-split_to_GH}. Moreover, choose any \(\eps\in(0,1/7)\) associated
with \(\delta^2\) as in Proposition \ref{prop:GH_to_delta-split}.
Given a point \(x\in A_k\), we can find \(r_x\in(0,1)\) such that
\(4r_x^2|K|\leq\eps\) and
\[
\sfd_{\rm pmGH}\Big(\big(X,\sfd/(2r_x),\mm_{2r_x}^x,x\big),
(\R^k,\sfd_{\rm Eucl},c_k\mathcal L^k,0^k)\Big)\leq\eps.
\]
By applying Vitali covering lemma to the family
\(\big\{B_{r_x}(x)\,:\,x\in A_k\cap B_R(p)\big\}\), we obtain a sequence
\((x_i)_i\subseteq A_k\cap B_R(p)\) such that \(\big\{B_{r_{x_i}}(x_i)\big\}_i\)
are pairwise disjoint and \(A_k\cap B_R(p)\subseteq\bigcup_i B_{5r_{x_i}}(x_i)\).
For any \(i\in\N\), we know from
Proposition \ref{prop:GH_to_delta-split} that there exists a \(\delta^2\)-splitting
map \(u^i\colon B_{10r_{x_i}}(x_i)\to\R^k\). Furthermore,
by Proposition \ref{prop:propag_delta-split} there exists a Borel
set \(G_\eta^i\subseteq B_{5r_{x_i}}(x_i)\) such that
\(\mm\big(B_{5r_{x_i}}(x_i)\setminus G_\eta^i\big)\leq
C_N\delta\,\mm\big(B_{5r_{x_i}}(x_i)\big)\) and \(u^i\colon B_s(x)\to\R^k\)
is a \(\delta\)-splitting map for every \(x\in G_\eta^i\) and \(s\in(0,5r_{x_i})\).
Hence, by Proposition \ref{prop:delta-split_to_GH}, for any
\(x\in G_\eta^i\) the following property holds:
\begin{equation}\label{eq:uniq_tg_aux}\begin{split}
&\text{Given any element }(Y,\varrho,\mathfrak n,y)\in{\rm Tan}_x(X,\sfd,\mm),
\text{ there exists}\\
&\text{a pointed }\RCD(0,N-k)\text{ space }(Z,\sfd_Z,\mm_Z,z)\text{ such that }\\
&\sfd_{\rm pmGH}\Big((Y,\varrho,\mathfrak n,y),\big(\R^k\times Z,\sfd_{\rm Eucl}
\times\sfd_Z,\mathcal L^k\otimes\mm_Z,(0^k,z)\big)\Big)\leq\eta.
\end{split}\end{equation}
Then let us define \(G_\eta\coloneqq\bigcup_i G_\eta^i\). Clearly, each element
of \(G_\eta\) satisfies \eqref{eq:uniq_tg_aux}. Moreover, it holds
\begin{equation}\label{eq:uniq_tg_aux2}\begin{split}
\mm\big(B_R(p)\cap(A_k\setminus G_\eta)\big)&\leq
\sum_{i\in\N}\mm\big(B_{5r_{x_i}}(x_i)\setminus G_\eta^i\big)
\leq C_N\delta\sum_{i\in\N}\mm\big(B_{5r_{x_i}}(x_i)\big)\\
&\leq C_N\eta\sum_{i\in\N}\mm\big(B_{r_{x_i}}(x_i)\big)
\leq C_N\eta\,\mm\big(B_{R+1}(p)\big).
\end{split}\end{equation}
Now consider the Borel set \(G\coloneqq\bigcap_i\bigcup_j G_{1/2^{i+j}}\). It follows
from \eqref{eq:uniq_tg_aux2} that \(\mm\big(B_R(p)\cap(A_k\setminus G)\big)=0\).
Moreover, let \(x\in A_k\cap G\) and
\((Y,\varrho,\mathfrak n,y)\in{\rm Tan}_x(X,\sfd,\mm)\)
be fixed. Then by using \eqref{eq:uniq_tg_aux} we can find a sequence
\(\big\{(Z_i,\sfd_{Z_i},\mm_{Z_i},z_i)\big\}_i\) of pointed \(\RCD(0,N-k)\)
spaces such that
\begin{equation}\label{eq:uniq_tg_aux3}
\big(\R^k\times Z_i,\sfd_{\rm Eucl}\times\sfd_{Z_i},\mathcal L^k\otimes
\mm_{Z_i},(0^k,z_i)\big)\overset{\rm pmGH}\longrightarrow(Y,\varrho,\mathfrak n,y)
\quad\text{ as }i\to\infty.
\end{equation}
Up to a not relabelled subsequence, we can suppose that
\((Z_i,\sfd_{Z_i},\mm_{Z_i},z_i)\to(Z,\sfd_Z,\mm_Z,z)\) in the pmGH-topology,
for some pointed \(\RCD(0,N-k)\) space \((Z,\sfd_Z,\mm_Z,z)\). Consequently,
\eqref{eq:uniq_tg_aux3} ensures that \((Y,\varrho,\mathfrak n,y)\) is isomorphic to
\(\big(\R^k\times Z,\sfd_{\rm Eucl}\times\sfd_Z,\mathcal L^k\otimes\mm_Z,(0^k,z)\big)\).
Given that \(x\in A_k\), we deduce that \(Z\) must be a singleton. In other words,
we have proven that any element of \({\rm Tan}_x(X,\sfd,\mm)\) is isomorphic to
\((\R^k,\sfd_{\rm Eucl},c_k\mathcal L^k,0^k)\), so that \(x\in\mathcal R_k\).
This shows that \(A_k\cap G\subseteq\mathcal R_k\), whence
the claim \eqref{eq:uniq_tg_cl} follows. 
\end{proof}
By combining Theorem \ref{thm:uniq_tg} with the properties of \(\delta\)-splitting
maps discussed in Section \ref{s:splitting_maps}, we can give a direct proof of the following
result, that was proved for the first time in \cite{Kitabeppu2019}:
\begin{theorem}\label{thm:lscdimversion}
Let \((X,\sfd,\mm)\) be an \(\RCD(K,N)\) space. Let \(k\in\N\), \(k\leq N\)
be the maximal number such that \(\mm(\mathcal R_k)>0\). Then for any \(x\in X\)
and \(\ell>k\) we have that no element of \({\rm Tan}_x(X,\sfd,\mm)\) splits off
a factor \(\R^\ell\). In particular, it holds that
\(\mathcal R_\ell=\emptyset\) for every \(\ell>k\).
\end{theorem}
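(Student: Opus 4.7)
The plan is to argue by contradiction. Suppose some $x_0 \in X$ admits
$$(Y, \varrho, \mathfrak n, y) \cong \big(\R^\ell \times Z_0,\, \sfd_{\rm Eucl} \times \sfd_{Z_0},\, \mathcal L^\ell \otimes \mm_{Z_0},\, (0^\ell, z_0)\big) \in \Tan_{x_0}(X, \sfd, \mm)$$
with $\ell > k$. I would propagate this $\R^\ell$-splitting via the chain \ref{prop:GH_to_delta-split}--\ref{prop:propag_delta-split}--\ref{prop:delta-split_to_GH} to a set of positive measure around $x_0$, and then combine this with Theorem \ref{thm:uniq_tg} and the maximality of $k$ to find a point inside that set whose unique Euclidean tangent is dimensionally incompatible with any further $\R^\ell$-splitting.

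First I fix $\eta_0 > 0$ so small that, for every $j \in \{0, 1, \ldots, k\}$, no pointed $\RCD(0, N-\ell)$ space $(Z, \sfd_Z, \mm_Z, z)$ satisfies
$$\sfd_{\rm pmGH}\Big((\R^j, \sfd_{\rm Eucl}, c_j \mathcal L^j, 0^j),\,(\R^\ell \times Z, \sfd_{\rm Eucl} \times \sfd_Z, \mathcal L^\ell \otimes \mm_Z, (0^\ell, z))\Big) \leq \eta_0.$$
Establishing the existence of such $\eta_0$ is the main technical obstacle. Should it fail for some $j \leq k$, I could take spaces $(Z_n, z_n)$ realising distances $\searrow 0$, extract by Remark \ref{rmk:cptness_RCD} a pmGH-convergent subsequence $(Z_n, z_n) \to (Z_\infty, z_\infty)$, and so obtain a pmGH-isomorphism $\R^\ell \times Z_\infty \cong \R^j$ of pointed metric measure spaces. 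This in particular yields an isometric embedding of the factor $\R^\ell$ into $\R^j$, which is impossible because $\ell > k \geq j$. Taking a minimum over the finite range of $j$ gives a uniform $\eta_0$.

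Now set $\delta \coloneqq \delta_{N, \eta_0}$ from Proposition \ref{prop:delta-split_to_GH} (further shrunk so that $C_N \delta < 1$) and $\eps \coloneqq \eps_{N, \delta^2}$ from Proposition \ref{prop:GH_to_delta-split}. By definition of tangent cone, I can choose $r \in (0, 1)$ with $r^2 |K| \leq \eps$ and
$$\sfd_{\rm pmGH}\big((X, \sfd/r, \mm_r^{x_0}, x_0),\,(\R^\ell \times Z_0, \sfd_{\rm Eucl} \times \sfd_{Z_0}, \mathcal L^\ell \otimes \mm_{Z_0}, (0^\ell, z_0))\big) \leq \eps.$$
Proposition \ref{prop:GH_to_delta-split} then yields a $\delta^2$-splitting map $u : B_{5r}(x_0) \to \R^\ell$, and Proposition \ref{prop:propag_delta-split} applied at the scale $5r/2$ produces a Borel set $G \subseteq B_{5r/2}(x_0)$ with $\mm(G) \geq (1 - C_N \delta)\,\mm(B_{5r/2}(x_0)) > 0$ on which $u : B_s(y) \to \R^\ell$ is a $\delta$-splitting map for every $y \in G$ and $s \in (0, 5r/2)$. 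Proposition \ref{prop:delta-split_to_GH} then forces every element of $\Tan_y(X, \sfd, \mm)$ to be pmGH-$\eta_0$-close to some $(\R^\ell \times Z_y, \sfd_{\rm Eucl} \times \sfd_{Z_y}, \mathcal L^\ell \otimes \mm_{Z_y}, (0^\ell, z_y))$ with $Z_y$ a pointed $\RCD(0, N-\ell)$ space.

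Finally, Theorem \ref{thm:uniq_tg} together with the maximality of $k$ gives $\mm\big(X \setminus \bigcup_{j \leq k} \mathcal R_j\big) = 0$, so I can pick $y \in G \cap \mathcal R_j$ for some $j \leq k$. Its unique tangent $(\R^j, \sfd_{\rm Eucl}, c_j \mathcal L^j, 0^j)$ is then pmGH-$\eta_0$-close to some $\R^\ell \times Z_y$, contradicting the choice of $\eta_0$ in the second paragraph. This proves the first assertion; the \emph{in particular} statement then follows at once, since any point of $\mathcal R_\ell$ with $\ell > k$ would have $\R^\ell$ as unique tangent, which trivially splits off a factor $\R^\ell$ (take $Z$ a singleton).
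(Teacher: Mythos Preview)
Your proof is correct and follows essentially the same route as the paper's own argument: both fix a gap $\eta_0$ (the paper calls it $\eps$) separating $\R^j$ from any $\R^\ell$-split space for $j\le k$, then run the chain Proposition~\ref{prop:GH_to_delta-split} $\to$ Proposition~\ref{prop:propag_delta-split} $\to$ Proposition~\ref{prop:delta-split_to_GH} to propagate the $\R^\ell$-splitting to a set of positive measure, and finally invoke Theorem~\ref{thm:uniq_tg} to locate a $j$-regular point in that set and reach a contradiction. The only differences are cosmetic (choice of scales, variable names, and the fact that you spell out the compactness argument for $\eta_0$ while the paper leaves it as ``easily checked arguing by contradiction'').
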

\begin{proof}
First of all, we claim that for any given \(\ell>k\) there exists \(\eps>0\) such that
\begin{equation}\label{eq:no_regular_pt_higher_aux1}
\sfd_{\rm pmGH}\Big((\R^j,\sfd_{\rm Eucl},c_j\mathcal L^j,0^j),
\big(\R^\ell\times Z,\sfd_{\rm Eucl}\times\sfd_Z,\mathcal L^\ell\otimes\mm_Z,(0^\ell,z)\big)\Big)>\eps
\end{equation}
for every \(j\leq k\) and for every pointed \(\RCD(0,N-\ell)\) space
\((Z,\sfd_Z,\mm_Z,z)\). This can be easily checked arguing by contradiction.

We prove the main statement by contradiction: suppose there exist
\(x\in X\) and \(\ell>k\) such that 
\begin{equation}\label{eq:no_regular_pt_higher_aux2}
\big(\R^\ell\times Z,\sfd_{\rm Eucl}\times\sfd_Z,\mathcal L^\ell\otimes\mm_Z,(0^\ell,z)\big)\in{\rm Tan}_x(X,\sfd,\mm)
\end{equation}
for some pointed \(\RCD(0,N-\ell)\) space \((Z,\sfd_Z,\mm_Z,z)\). Consider
\(\eps>0\) associated with \(\ell\) as in \eqref{eq:no_regular_pt_higher_aux1}.
Choose \(\delta>0\) associated with \(\eps\) as in Proposition
\ref{prop:delta-split_to_GH}, then \(\eta>0\) associated with
\(\delta^2\) as in Proposition \ref{prop:GH_to_delta-split}. It follows
from \eqref{eq:no_regular_pt_higher_aux2} that there is \(r>0\)
such that \(r^2|K|\leq\eta\) and
\[
\sfd_{\rm pmGH}\Big((X,\sfd/r,\mm_r^x,x),\big(\R^\ell\times Z,
\sfd_{\rm Eucl}\times\sfd_Z,\mathcal L^\ell\otimes\mm_Z,(0^\ell,z)\big)\Big)\leq\eta.
\]
Then Proposition \ref{prop:GH_to_delta-split} guarantees the existence of a
\(\delta^2\)-splitting map \(u\colon B_{5r}(x)\to\R^\ell\). Therefore, by
Propositions \ref{prop:propag_delta-split} and \ref{prop:delta-split_to_GH}
we know that there exists a Borel set \(G\subseteq B_r(x)\) with \(\mm(G)>0\)
satisfying the following property: for any point \(y\in G\), it holds that each
element of \({\rm Tan}_y(X,\sfd,\mm)\) is \(\eps\)-close (with respect
to the distance \(\sfd_{\rm pmGH}\)) to some space that splits off a
factor \(\R^\ell\). Given that \(X\setminus(\mathcal R_1\cup\cdots\cup\mathcal R_k)\)
has null \(\mm\)-measure by Theorem \ref{thm:uniq_tg}, there must exist \(y\in G\)
and \(j\leq k\) for which \((\R^j,\sfd_{\rm Eucl},c_j\mathcal L^j,0^j)\) is
the only element of \({\rm Tan}_y(X,\sfd,\mm)\). Consequently, we have that
\[
\sfd_{\rm pmGH}\Big((\R^j,\sfd_{\rm Eucl},c_j\mathcal L^j,0^j),
\big(\R^\ell\times Z',\sfd_{\rm Eucl}\times\sfd_{Z'},\mathcal L^\ell\otimes\mm_{Z'},
(0^\ell,z')\big)\Big)\leq\eps
\]
for some pointed \(\RCD(0,N-\ell)\) space \((Z',\sfd_{Z'},\mm_{Z'},z')\).
This is in contradiction with \eqref{eq:no_regular_pt_higher_aux1}.
\end{proof}
\begin{remark}[Constant dimension]{\rm
We point out that the first and third named authors proved in \cite{BS18}
that any \(\RCD(K,N)\) space \((X,\sfd,\mm)\) has `constant dimension',
in the following sense: there exist a (unique) \(k\in\N\), \(k\leq N\)
such that \(\mm(X\setminus\mathcal R_k)=0\). The number \(k\) is called \emph{essential dimension} of \((X,\sfd,\mm)\) and denoted by $\dim(X,\sfd,\mm)$. With this notation, Theorem \ref{thm:lscdimversion} can be rephrased by saying that at no point of $X$ an element of the tangent cone can split off a Euclidean factor of dimension bigger than $\dim(X,\sfd,\mm)$.
}\end{remark}

Actually, Theorem \ref{thm:lscdimversion} above is an instance of a more general result that can be proved arguing in a similar manner: the essential dimension of $\RCD(K,N)$ spaces is lower semicontinuous with respect to pointed measured Gromov-Hausdorff convergence.\\
This statement has been proved for the first time in \cite[Theorem 4.10]{Kitabeppu2019}. Below we just sketch how our techniques can provide a slightly more direct proof, still based on the same ideas and on the theory of convergence of Sobolev functions on varying spaces developed in \cite{AmbrosioHonda17,AmbrosioHonda18}.\\
Let us point out that the result below is independent of \cite{BS18} once the essential dimension of an $\RCD(K,N)$ m.m.s. is understood as the maximal $n$ for which $\mm(\mathcal{R}_n)>0$.

\begin{theorem}\label{thm:lscdimension}
Let $(X_n,\sfd_n,\mm_n,x_n)$ and $(X,\sfd,\mm,x)$ be pointed $\RCD(K,N)$ metric measure spaces and assume that $(X_n,\sfd_n,\mm_n,x_n)$ converge to $(X,\sfd,\mm,x)$ in the pointed measured Gromov-Hausdorff sense. Then
\begin{equation*}
\dim(X,\sfd,\mm)\le\liminf_{n\to\infty}\dim(X_n,\sfd_n,\mm_n).
\end{equation*}
\end{theorem}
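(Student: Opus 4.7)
The plan is to argue by contradiction, running an argument parallel to the proof of Theorem~\ref{thm:lscdimversion} but with the extra step of transferring pmGH-closeness to a Euclidean space from $X$ to $X_n$. Set $k:=\dim(X,\sfd,\mm)$. Passing to a subsequence that realises the liminf, we may assume $\dim(X_n,\sfd_n,\mm_n)=:k_n$ is eventually equal to some $k_\infty\le N$, and suppose for contradiction that $k_\infty\le k-1$, so that $k_n\le k-1$ for all $n$ large.

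The first step is to produce, on each $X_n$ with $n$ large, a $\delta^2$-splitting map with values in $\R^k$. Since $\mm(\mathcal R_k)>0$, pick any $k$-regular point $x^*\in X$, and let $x^*_n\to x^*$ realise the pmGH convergence $(X_n,\sfd_n,\mm_n,x_n)\to(X,\sfd,\mm,x)$. Given $\eps>0$ (to be fixed below), choose $r^*\in(0,1)$ with $(r^*)^2|K|\le\eps$ such that $(X,\sfd/(2r^*),\mm_{2r^*}^{x^*},x^*)$ is within $\sfd_{\rm pmGH}$-distance $\eps/2$ of $(\R^k,\sfd_{\rm Eucl},c_k\Leb^k,0^k)$. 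Using the scaling property of the $\RCD(K,N)$ condition and the continuity of the normalising constant $C(\cdot,2r^*)$ along the pmGH convergence together with the basepoints $x^*_n\to x^*$, one checks that for $n$ large
\[
\sfd_{\rm pmGH}\Big((X_n,\sfd_n/(2r^*),(\mm_n)_{2r^*}^{x^*_n},x^*_n),(\R^k,\sfd_{\rm Eucl},c_k\Leb^k,0^k)\Big)\le\eps.
\]
Proposition~\ref{prop:GH_to_delta-split} then provides, for $\eps$ sufficiently small in terms of $\delta$, a $\delta^2$-splitting map $u_n\colon B_{10r^*}(x^*_n)\to\R^k$ on $X_n$, where $\delta\in(0,1)$ is a parameter to be chosen.

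Now Proposition~\ref{prop:propag_delta-split} provides a Borel set $G_n\subseteq B_{5r^*}(x^*_n)$ with $\mm_n\big(B_{5r^*}(x^*_n)\setminus G_n\big)\le C_N\delta\,\mm_n\big(B_{5r^*}(x^*_n)\big)$ on which $u_n$ is a $\delta$-splitting map at every scale $s\in(0,5r^*)$; in particular $\mm_n(G_n)>0$ for $\delta$ small enough. By Proposition~\ref{prop:delta-split_to_GH}, every element of $\Tan_y(X_n,\sfd_n,\mm_n)$ for $y\in G_n$ is $\eta$-close in $\sfd_{\rm pmGH}$ to some pointed product $\R^k\times Z$ with $(Z,\sfd_Z,\mm_Z,z)$ a pointed $\RCD(0,N-k)$ space, where $\eta\downarrow 0$ as $\delta\downarrow 0$. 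On the other hand, since $k_n\le k-1$, Theorem~\ref{thm:uniq_tg} and Theorem~\ref{thm:lscdimversion} applied to $X_n$ force $\mm_n$-a.e.\ point of $X_n$ to be $j$-regular for some $j\le k-1$; hence we can pick a point $y\in G_n$ whose unique tangent is $(\R^j,\sfd_{\rm Eucl},c_j\Leb^j,0^j)$ for some $j\le k-1$. A compactness argument identical in spirit to the one producing~\eqref{eq:no_regular_pt_higher_aux1} (extract a pmGH limit via Remark~\ref{rmk:cptness_RCD} and use that an $\R^k$-factor cannot appear as an isometric copy of $\R^j$ with $j<k$) yields $\eps_0=\eps_0(N,k)>0$ below which no $(\R^j,\sfd_{\rm Eucl},c_j\Leb^j,0^j)$ with $j\le k-1$ is $\sfd_{\rm pmGH}$-close to any space of the form $\R^k\times Z$. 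Fixing $\delta$ at the outset so small that $\eta<\eps_0$ contradicts the previous sentence, closing the argument.

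The main obstacle is the very first step, namely checking that pmGH convergence of the original pointed spaces transfers, at a fixed small scale $r^*$, to pmGH convergence of the normalised rescalings $(X_n,\sfd_n/(2r^*),(\mm_n)_{2r^*}^{x^*_n},x^*_n)$ with basepoints $x^*_n\to x^*$. This amounts to continuity of the operation $\mm\mapsto\mm_r^x$ along pmGH convergence of pointed $\RCD(K,N)$ spaces, and is routine but is the only place where the pmGH hypothesis is genuinely used; once granted, the remainder of the argument is just a combination of Propositions~\ref{prop:GH_to_delta-split}, \ref{prop:propag_delta-split}, \ref{prop:delta-split_to_GH} with the structure theory already in place.
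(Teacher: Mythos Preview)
Your argument is correct, and the structure after the first step is essentially identical to the paper's: propagate the $\delta$-splitting property (Proposition~\ref{prop:propag_delta-split}), read off near-splitting of tangents (Proposition~\ref{prop:delta-split_to_GH}), and conclude via the gap estimate \eqref{eq:no_regular_pt_higher_aux1}. The genuine difference lies in how the $\delta$-splitting map is produced on $X_n$. The paper first builds a $\delta$-splitting map $u\colon B_2(y)\to\R^k$ on the limit $X$ and then \emph{transfers the map itself} to $X_n$ by approximating its components in strong $H^{1,2}$ with harmonic functions, invoking the Ambrosio--Honda stability theory \cite{AmbrosioHonda17,AmbrosioHonda18}. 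You instead transfer only the \emph{geometric information}---the pmGH-closeness of a fixed-scale blow-up at a $k$-regular point to $\R^k$---from $X$ to $X_n$, and then build the $\delta^2$-splitting map directly on each $X_n$ via Proposition~\ref{prop:GH_to_delta-split}. Your route is more elementary, since continuity of the rescaling/renormalisation $\mm\mapsto\mm_r^x$ under pmGH convergence with moving basepoints is routine and entirely bypasses the $H^{1,2}$-stability machinery; the paper's route, by contrast, keeps track of a single analytic object across the sequence, which is closer in spirit to the arguments in \cite{BPS19} and may be preferable when one needs quantitative control on the maps themselves rather than mere existence.
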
   

\begin{proof}
Let $k:=\dim(X,\sfd,\mm)$. We need to prove that, for $n$ sufficiently large, it holds $k\le\dim(X_n,\sfd_n,\mm_n)$.

Up to scaling of the distance $\sfd$ on $X$, we can assume that $K\ge -1$ and by Proposition \ref{prop:GH_to_delta-split} we find $y\in X$ and a $\delta$-splitting map $u:B_2(y)\to \R^k$. Arguing as in the proof of \cite[Proposition 3.9]{BPS19}, relying on the convergence and stability results of \cite{AmbrosioHonda18}, we can find $1<r<2$, points $X_n\ni y_n\to y\in X$ and $2\delta$-splitting maps $u_n:B_r(y_n)\to\R^k$, for any $n$ sufficiently large (it suffices to approximate the components of $u$ in the strong $H^{1,2}$-sense with harmonic functions).\\
Next, Proposition \ref{prop:propag_delta-split} provides sets $G_n\subset B_{r/2}(y_n)$ such that $\mm_n(B_r(y_n)\setminus G_n)\le C_N\sqrt{2\delta}\mm_n(B_{r/2}(y_n))$ and 
	\[
	u_n\colon B_s(x)\to\R^k\text{ is a $\sqrt{2\delta}$-splitting map, for every $x\in G_n$ and $s\in(0,r/2)$},
	\]
for any $n$ sufficiently large.

Now it suffices to choose $\delta$ such that $\sqrt{2\delta}\le \delta_{\epsilon}$ given by Proposition \ref{prop:delta-split_to_GH} to get that, at any point in $G_n$, any tangent is $\eps$-close to a space splitting a factor $\R^k$. Choosing $\eps$ small enough and arguing as in the proof of Theorem \ref{thm:lscdimversion} above we obtain that $\dim(X_n,\sfd_n,\mm_n)\ge k$ for sufficiently large $n$.
\end{proof}

\subsection{Metric rectifiability of \texorpdfstring{\(\rm RCD\)}{RCD} spaces}\label{subsec:rect}
Aim of this section is to exploit \(\delta\)-splitting maps to show that
\(\RCD(K,N)\) spaces are metrically rectifiable, in the following sense:
\begin{definition}
Given a metric measure space \((X,\sfd,\mm)\), \(k\in\N\) and \(\eps>0\),
we say that a Borel set \(E\subseteq X\) is \emph{\((\mm,k,\eps)\)-rectifiable}
provided there exists a sequence \(\big\{(G_n,u_n)\big\}_n\), where
\(G_n\subseteq X\) are Borel sets satisfying \(\mm\big(X\setminus\bigcup_n G_n\big)=0\)
and the maps \(u_n\colon G_n\to\R^k\) are \((1+\eps)\)-biLipschitz with their images.
\end{definition}
Rectifiability of $\RCD(K,N)$ spaces in the above sense was first proved in \cite[Theorem 1.1]{Mondino-Naber14}. Below we provide a different proof, more in the spirit of the Cheeger-Colding theory for Ricci limits (cf. \cite{Cheeger-Colding97III}) and relying on the connection between $\delta$-splitting maps and $\eps$-isometries.
\begin{lemma}\label{lem:GH-isom_all_scales}
Let \(N>1\) be given. Then for any \(\eta>0\) there exists
\(\delta=\delta_{N,\eta}>0\) such that the following property
holds. If \((X,\sfd,\mm)\) is an \(\RCD(K,N)\) space and
\(u\colon B_r(x)\to\R^k\) is a \(\delta\)-splitting map for
some radius \(r>0\) with \(r^2|K|\leq 1\) and some point \(x\in X\) satisfying
\[
\sfd_{\rm pmGH}\big((X,\sfd/r,\mm_r^x,x),
(\R^k,\sfd_{\rm Eucl},c_k\mathcal L^k,0^k)\Big)<\delta^2,
\]
then it holds that \(u\colon B_r(x)\to\R^k\) is an
\emph{\(\eta r\)-GH isometry}, meaning that
\[
\Big|\big|u(y)-u(z)\big|-\sfd(y,z)\Big|\leq\eta r\quad
\text{ for every }y,z\in B_r(x).
\]
\end{lemma}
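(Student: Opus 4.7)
\smallskip

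My approach is a compactness-contradiction argument, exploiting the stability results of Ambrosio--Honda combined with the rigidity of harmonic maps on Euclidean space with vanishing Hessian.

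Suppose the lemma fails. Then there exist $\eta>0$ and, for each $n\in\N$, an $\RCD(K_n,N)$ space $(X_n,\sfd_n,\mm_n)$, a point $x_n\in X_n$, a radius $r_n>0$ with $r_n^2|K_n|\leq 1$, and a $\delta_n$-splitting map $u_n\colon B_{r_n}(x_n)\to\R^k$ with $\delta_n\searrow 0$ such that
\[
\sfd_{\rm pmGH}\big((X_n,\sfd_n/r_n,\mm_n^{x_n},x_n),(\R^k,\sfd_{\rm Eucl},c_k\mathcal L^k,0^k)\big)<\delta_n^2,
\]
yet $u_n$ fails to be an $\eta r_n$-GH isometry; that is, there exist $y_n,z_n\in B_{r_n}(x_n)$ with $\big||u_n(y_n)-u_n(z_n)|-\sfd_n(y_n,z_n)\big|>\eta r_n$. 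After rescaling the distance by $1/r_n$ and the measure by the corresponding normalisation, we may assume $r_n=1$, and by assumption the rescaled spaces converge in pmGH to $(\R^k,\sfd_{\rm Eucl},c_k\mathcal L^k,0^k)$.

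Since every $u_n$ is $C_N$-Lipschitz, an Arzel\`{a}--Ascoli type argument adapted to pmGH convergence (see \cite{AmbrosioHonda17,AmbrosioHonda18}) allows us, after passing to a subsequence, to find a limit map $u_\infty\colon B_1(0^k)\to\R^k$ such that $u_n\to u_\infty$ uniformly in the GH sense: whenever $w_n\in B_1(x_n)$ pmGH-converges to $w\in\overline{B_1(0^k)}$, we have $u_n(w_n)\to u_\infty(w)$. Moreover, by the stability of harmonic functions, the components of $u_\infty$ are harmonic on $B_1(0^k)$, and the strong $H^{1,2}$-convergence of $u_n$ (this is where one applies \cite{AmbrosioHonda18} crucially) combined with conditions (ii) and (iii) of Definition \ref{def:splitting maps} passes to the limit and yields that $u_\infty$ satisfies
\[
\big|{\rm Hess}(u_\infty^a)\big|=0\quad\mathcal L^k\text{-a.e.\ on }B_1(0^k),\qquad \nabla u_\infty^a\cdot\nabla u_\infty^b=\delta_{ab}\quad\mathcal L^k\text{-a.e.\ on }B_1(0^k).
\]

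Since $u_\infty$ is harmonic with vanishing Hessian on a connected open set of $\R^k$, it is affine: $u_\infty(y)=Ay+b$ for some $A\in\R^{k\times k}$ and $b\in\R^k$. The orthonormality of the gradients forces $A^\top A=I_k$, so $A\in O(k)$ and $u_\infty$ is a Euclidean isometry on $B_1(0^k)$. Now extract a further subsequence so that $y_n\to y$ and $z_n\to z$ in the pmGH sense, with $y,z\in\overline{B_1(0^k)}$. From the pmGH convergence we have $\sfd_n(y_n,z_n)\to |y-z|$, while from the uniform convergence of $u_n$ we have $|u_n(y_n)-u_n(z_n)|\to|u_\infty(y)-u_\infty(z)|=|y-z|$. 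This contradicts $\big||u_n(y_n)-u_n(z_n)|-\sfd_n(y_n,z_n)\big|>\eta$ for all $n$.

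The main technical obstacle lies in the second paragraph: justifying that the $u_n$ admit a uniform limit $u_\infty$ with the correct Sobolev/Hessian regularity on the Euclidean limit ball. This amounts to verifying strong $H^{1,2}$-convergence along $C_N$-Lipschitz harmonic functions with uniformly bounded Hessian energy on pmGH-converging $\RCD$ spaces, which is exactly the content of the Ambrosio--Honda stability results, and is carried out in our earlier work \cite{BPS19}. Once this is in place, the rigidity step (affineness plus orthonormal Jacobian $\Rightarrow$ rigid motion) is elementary and closes the contradiction immediately.
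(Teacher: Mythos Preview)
Your proof is correct and follows essentially the same compactness--contradiction strategy as the paper: rescale to $r=1$, use pmGH convergence to $\R^k$ together with the Ambrosio--Honda stability results to extract a strong $H^{1,2}$ limit $u_\infty$, deduce from the vanishing Hessian and orthonormal gradients that $u_\infty$ is a rigid motion, and then pass to the limit in the failed inequality to obtain a contradiction. The only cosmetic difference is that the paper normalises $u^n(x_n)=0^k$ at the outset (which you implicitly need for the Arzel\`a--Ascoli step), and concludes that $u_\infty$ is the restriction of an orthogonal transformation rather than a general rigid motion; neither point affects the argument.
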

\begin{proof}
Thanks to a scaling argument, it suffices to prove the statement
for \(r=1\) and \(|K|\leq 1\). We argue by contradiction:
suppose there exist \(\eta>0\), a sequence of spaces \((X_n,\sfd_n,\mm_n,x_n)\)
and a sequence of maps \(u^n\colon B_1(x_n)\to\R^k\), such that the following
properties are satisfied.
\begin{itemize}
\item[\(\rm i)\)] \((X_n,\sfd_n,\mm_n)\) is an \(\RCD(K,N)\) space.
\item[\(\rm ii)\)] \(u^n\) is a \(1/n\)-splitting map with \(u^n(x_n)=0^k\).
\item[\(\rm iii)\)] It holds that
\(\sfd_{\rm pmGH}\big((X_n,\sfd_n,\mm_n,x_n),
(\R^k,\sfd_{\rm Eucl},c_k\mathcal L^k,0^k)\big)\leq 1/n\).
\item[\(\rm iv)\)] \(u^n\) is not an \(\eta\)-GH isometry, so that
there exist points \(y_n,z_n\in B_1(x_n)\) such that
\begin{equation}\label{eq:u_n_no_GH-isom}
\Big|\big|u^n(y_n)-u^n(z_n)\big|-\sfd_n(y_n,z_n)\Big|>\eta.
\end{equation}
\end{itemize}
Observe that item iii) guarantees that
\((X_n,\sfd_n,\mm_n,x_n)\to(\R^k,\sfd_{\rm Eucl},c_k\mathcal L^k,0^k)\)
in the pmGH-topology. Possibly taking a not relabelled subsequence,
it holds that \(u^n\to u^\infty\) strongly in \(H^{1,2}\) on \(B_1(0^k)\),
for some limit map \(u^\infty\colon B_1(0^k)\to\R^k\)
(cf.\ \cite{AmbrosioHonda17,AmbrosioHonda18}
for the theory of convergence on varying spaces).\\
We also deduce from item ii) above that \({\rm Hess}(u^\infty_a)=0\)
and \(\nabla u^\infty_a\cdot\nabla u^\infty_b=\delta_{ab}\) on \(B_1(0^k)\)
for all \(a,b=1,\ldots,k\) (further details are discussed in the proof of \cite[Proposition 3.7]{BPS19}), whence \(u^\infty\) is the restriction
to \(B_1(0^k)\) of an orthogonal transformation of \(\R^k\). This gives a contradiction since, by letting \(n\to\infty\) in
\eqref{eq:u_n_no_GH-isom}, we obtain that
\[
\Big|\big|u^\infty(y_\infty)-u^\infty(z_\infty)\big|-
|y_\infty-z_\infty|\Big|\geq\eta,
\]
where \(y_\infty,z_\infty\in B_1(0^k)\) stand for the limit points of \((y_n)_n\)
and \((z_n)_n\), respectively (notice that $x_{\infty}\neq y_{\infty}$ as a consequence of \eqref{eq:u_n_no_GH-isom} and (i) in Definition \ref{def:splitting maps}).
\end{proof}
Let \((X,\sfd,\mm)\) be an \(\RCD(K,N)\) space. Let \(k\in\N\) be
such that \(k\leq N\). Then we define
\[
(\mathcal R_k)_{r,\delta}\coloneqq\Big\{x\in\mathcal R_k\;\Big|\;
\sfd_{\rm pmGH}\big((X,\sfd/s,\mm_s^x,x),
(\R^k,\sfd_{\rm Eucl},c_k\mathcal L^k,0^k)\big)<\delta\;\text{ for every }s<r\Big\}
\]
for every \(r,\delta>0\). Observe that for any given \(\delta>0\) it holds
that \((\mathcal R_k)_{r,\delta}\nearrow\mathcal R_k\) as \(r\searrow 0\).
\begin{theorem}[Rectifiability of \(\RCD\) spaces]\label{thm:rect}
Let \((X,\sfd,\mm)\) be an \(\RCD(K,N)\) space. Let \(k\in\N\) be
such that \(k\leq N\). Then the \(k\)-regular set \(\mathcal R_k\)
of \(X\) is \((\mm,k,\eps)\)-rectifiable for every \(\eps>0\).
\end{theorem}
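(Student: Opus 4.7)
My plan is to use $\delta$-splitting maps as biLipschitz charts on suitable subsets of $\mathcal R_k$, translating the propagation bound of Proposition \ref{prop:propag_delta-split} and the GH-isometry control of Lemma \ref{lem:GH-isom_all_scales} into genuine metric distortion estimates. Given $\eps>0$, I first fix $\eta>0$ so small that both $1+2\eta\leq 1+\eps$ and $(1-2\eta)^{-1}\leq 1+\eps$, then set $\delta:=\delta_{N,\eta}$ from Lemma \ref{lem:GH-isom_all_scales}, and finally set $\beta:=\min\{\eps_{N,\delta^2},\delta^2\}$ with $\eps_{N,\delta^2}$ as in Proposition \ref{prop:GH_to_delta-split}. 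Since $(\mathcal R_k)_{r,\beta}\nearrow\mathcal R_k$ as $r\searrow 0$, it is enough to prove $(\mm,k,\eps)$-rectifiability of each bounded set $S:=(\mathcal R_k)_{1/m,\beta}\cap B_R(x_0)$.

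For every $x\in S$ and any scale $\rho\in(0,1/m)$ with $\rho^2|K|\leq\eps_{N,\delta^2}$, Proposition \ref{prop:GH_to_delta-split} furnishes a $\delta^2$-splitting map $u^x\colon B_{5\rho}(x)\to\R^k$; setting $r_x:=5\rho/2$, the map $u^x$ is then $\delta^2$-splitting on $B_{2r_x}(x)$ with $r_x$ freely chosen arbitrarily small. A standard 5-fold Vitali argument extracts a countable collection $\{x_i\}_{i\in\N}\subseteq S$ with $\{B_{r_{x_i}/5}(x_i)\}_i$ pairwise disjoint and $S\subseteq\bigcup_i B_{r_{x_i}}(x_i)$. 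Applying Proposition \ref{prop:propag_delta-split} to each $u^{x_i}$ on $B_{2r_{x_i}}(x_i)$ yields Borel sets $G^i\subseteq B_{r_{x_i}}(x_i)$ with $\mm\bigl(B_{r_{x_i}}(x_i)\setminus G^i\bigr)\leq C_N\delta\,\mm\bigl(B_{r_{x_i}}(x_i)\bigr)$, on which $u^{x_i}\colon B_s(y)\to\R^k$ is a $\delta$-splitting map for every $y\in G^i$ and every $s\in(0,r_{x_i})$.

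At any $y\in G^i\cap S$ and $s\in(0,r_{x_i})$ both hypotheses of Lemma \ref{lem:GH-isom_all_scales} are in force at scale $s$: the $\delta$-splitting condition by propagation, and the pmGH-closeness of $(X,\sfd/s,\mm_s^y,y)$ to $(\R^k,\sfd_{\rm Eucl},c_k\mathcal L^k,0^k)$ within distance $\delta^2$ because $y\in(\mathcal R_k)_{1/m,\beta}$ and $s<1/m$. Hence $u^{x_i}$ is an $\eta s$-GH isometry on every such $B_s(y)$. Applying this with $s:=2\sfd(y_1,y_2)$ to any $y_1,y_2\in G^i\cap S$ with $\sfd(y_1,y_2)<r_{x_i}/2$ gives $\bigl||u^{x_i}(y_1)-u^{x_i}(y_2)|-\sfd(y_1,y_2)\bigr|\leq 2\eta\,\sfd(y_1,y_2)$, which is $(1+\eps)$-biLipschitz on pairs at distance $<r_{x_i}/2$. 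Partitioning each $G^i\cap S$ into countably many Borel pieces of diameter $<r_{x_i}/4$ upgrades this to a genuine $(1+\eps)$-biLipschitz chart on every piece, yielding a countable family of biLipschitz charts covering $\bigcup_i(G^i\cap S)$.

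The residual set $S\setminus\bigcup_i G^i$ has $\mm$-measure at most $C_N\delta\sum_i\mm\bigl(B_{r_{x_i}}(x_i)\bigr)$, which by itself is bounded only by the measure of a neighborhood of $S$. To drive it to zero I would iterate the entire construction on successive leftovers, at each stage choosing the radii $r_x$ along a subsequence realizing $\mm$-density one with respect to the current residual (available thanks to the Lebesgue differentiation theorem for the doubling measure $\mm$); this guarantees that a fixed fraction $\geq 1-C_N'\delta$ of the residual mass is covered per round, and iterating countably many times makes the uncovered measure decay geometrically to zero. The main technical obstacle I anticipate is precisely this last step: coordinating the Vitali radii with density points of the current leftover so as to secure a \emph{uniformly} positive fraction covered per iteration, rather than merely a positive amount. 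The remainder of the argument is a careful but routine matching of the parameters $\eps\leadsto\eta\leadsto\delta\leadsto\beta$ with the scale-$5$ inflations coming from Vitali's lemma and from Propositions \ref{prop:GH_to_delta-split} and \ref{prop:propag_delta-split}.
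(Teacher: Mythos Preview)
Your proposal is correct and mirrors the paper's argument closely: same chain Proposition~\ref{prop:GH_to_delta-split} $\to$ Proposition~\ref{prop:propag_delta-split} $\to$ Lemma~\ref{lem:GH-isom_all_scales}, same Vitali covering, same restriction to sets of the form $(\mathcal R_k)_{r,\beta}$. The one place you diverge is the handling of the leftover $S\setminus\bigcup_i(G^i\cap S)$, which you propose to kill by iterating at density points of the residual. That does work (choosing $r_x$ so that $\mm\big(R\cap B_{r_x/5}(x)\big)\geq\tfrac12\,\mm\big(B_{r_x/5}(x)\big)$ at $\mm$-a.e.\ $x$ in the current residual $R$ gives, via disjointness of the $B_{r_{x_i}/5}(x_i)$ and doubling, $\sum_i\mm\big(B_{r_{x_i}}(x_i)\big)\leq C_N\,\mm(R)$, hence geometric decay once $\delta$ is taken small enough), but the paper bypasses the iteration entirely. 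It is content to produce, for each $\eps>0$, an $(\mm,k,\eps)$-rectifiable set $G^\eps\subseteq\mathcal R_k$ with $\mm(\mathcal R_k\setminus G^\eps)<\eps$ --- which is exactly what your construction already yields with no iteration, once $\eta$ is tuned relative to a fixed ambient ball --- and then observes that $\bigcup_n G^{\eps/n}$ is still $(\mm,k,\eps)$-rectifiable and has full $\mm$-measure in $\mathcal R_k$. A minor secondary difference is that the paper works at a single fixed scale $r$ on each bounded piece (yielding a finite Vitali cover) and arranges the propagation up to scale $2r$ by starting from a $\delta$-splitting map on $B_{5r}$, so that any two points of $G\subseteq B_r(p)$ are within the propagated range and no further partition of the $G^i$ into small-diameter pieces is needed.
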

\begin{proof}
We claim that for any $\eps>0$ there exists an $(\mm,k,\eps)$-rectifiable set $G^{\eps}\subset \mathcal{R}_k$ such that $\mm(\mathcal{R}_k\setminus G^{\eps})<\eps$.\\
Notice that the statement follows from the claim above observing that
\begin{equation*}
\mm\Big(\mathcal{R}_k\setminus \bigcup_{n=1}^{\infty}G^{\eps/n}\Big)=0.	
\end{equation*}

Let us prove the claim in two steps.\\
\textbf{Step 1.}
We claim that for any \(\eta>0\) there exists
\(\delta=\delta_{N,\eta}\in(0,1)\) such that the following property holds:
if \((X,\sfd,\mm)\) is an \(\RCD(K,N)\) space and \(u\colon B_{5r}(p)\to\R^k\)
is a \(\delta\)-splitting map for some radius \(r>0\) satisfying \(r^2|K|\leq 1\)
and some point \(p\in(\mathcal R_k)_{2r,\delta}\), then there exists a Borel set
\(G\subseteq B_r(p)\) such that
\(\mm\big(B_r(p)\setminus G\big)\leq C_N\eta\,\mm\big(B_r(p)\big)\) and
\begin{equation}\label{eq:rect_aux}
\Big|\big|u(x)-u(y)\big|-\sfd(x,y)\Big|\leq\eta\,\sfd(x,y)
\quad\text{ for every }x,y\in(\mathcal R_k)_{2r,\delta}\cap G.
\end{equation}

To prove it, choose any \(\delta\in(0,\eta^2)\) so that \(\sqrt\delta\) is associated
with \(\eta\) as in Lemma \ref{lem:GH-isom_all_scales}. Now let us consider
an \(\RCD(K,N)\) space \((X,\sfd,\mm)\) and a \(\delta\)-splitting
map \(u\colon B_{5r}(p)\to\R^k\), for some \(r>0\) with \(r^2|K|\leq 1\)
and \(p\in(\mathcal R_k)_{2r,\delta}\). By Proposition
\ref{prop:propag_delta-split}, we can find a Borel set \(G\subseteq B_r(p)\) such
that \(\mm\big(B_r(p)\setminus G\big)\leq C_N\eta\,\mm\big(B_r(p)\big)\) and
\(u\colon B_s(x)\to\R^k\) is a \(\sqrt\delta\)-splitting map for all \(x\in G\)
and \(s\in(0,2r)\). Then Lemma \ref{lem:GH-isom_all_scales} guarantees that
the map \(u\colon B_s(x)\to\R^k\) is an \(\eta s\)-GH isometry for every
\(x\in(\mathcal R_k)_{2r,\delta}\cap G\) and \(s\in(0,2r)\) (here we used
the fact that \(x\in(\mathcal R_k)_{2r,\delta}\subseteq(\mathcal R_k)_{s,\delta}\)).

Fix any \(x,y\in(\mathcal R_k)_{2r,\delta}\cap G\).
Being \(\sfd(x,y)<2r\), we know that the map
\(u\colon B_{\sfd(x,y)}(x)\to\R^k\) is an \(\eta\,\sfd(x,y)\)-GH isometry,
thus in particular \(\big||u(x)-u(y)|-\sfd(x,y)\big|\leq\eta\,\sfd(x,y)\).
This yields \eqref{eq:rect_aux}.\\
\textbf{Step 2.} Fix $\bar x\in X$, $R>0$, $\eps>0$. We aim to build an $(\mm,k,\eps)$-rectifiable set $G$ satisfying $\mm(B_R(\bar x)\cap \mathcal{R}_k\setminus G)<\eps$. Note that this easily implies our claim.

Let $\eta<\eps$ to be chosen later, $\delta=\delta_{N,\eta}$ according to Step 1, $\bar \eps\in (0,\delta)$ associated to $\delta$ as in Proposition \ref{prop:GH_to_delta-split} and $r>0$ satisfying $r^2|K|\le 1$ and $\mm(B_R(\bar x)\cap (\mathcal{R}_k\setminus (\mathcal{R})_{2r,\bar \eps}))\le \eps/2$.
By Vitali covering lemma, we find
points \(x_1,\ldots,x_\ell\in B_R(\bar x)\cap(\mathcal R_k)_{2r,\bar \eps}\) for which
\(\big\{B_{r/5}(x_i)\big\}_{i=1}^\ell\) are pairwise disjoint and
\(B_R(\bar x)\cap(\mathcal R_k)_{2r,\bar \eps}\subseteq B_{r}(x_1)\cup\cdots\cup B_{r}(x_\ell)\).
Proposition \ref{prop:GH_to_delta-split} guarantees the existence of a
\(\delta\)-splitting map \(u^i\colon B_{5r}(x_i)\to\R^k\) for every
\(i=1,\ldots,\ell\). Therefore Step 1 yields Borel sets
\(G_i\subseteq B_{r}(x_i)\) such that
\(\mm\big(B_{r}(x_i)\setminus G_i\big)\leq C_N\eta\,\mm\big(B_{r}(x_i)\big)\)
and \(\big||u^i(x)-u^i(y)|-\sfd(x,y)\big|\leq\eta\,\sfd(x,y)\) for
every \(x,y\in (\mathcal R_k)_{2r,\bar \eps}\cap G_i\), for every $i=1,\ldots,\ell$.

Since $\eta<\eps$, we deduce that
\(u^i\) is \((1+\eps)\)-biLipschitz with its image when restricted to
\((\mathcal R_k)_{2r,\bar \eps}\cap G_i\), whence \(G\coloneqq (\mathcal R_k)_{2r,\bar \eps}\cap\bigcup_{i=1}^\ell G_i\)
is \((\mm,k,\eps)\)-rectifiable. Observe that
\[\begin{split}
\mm\Big(\big(B_R(\bar x)\cap(\mathcal R_k)_{2r,\bar \eps}\big)\setminus G\Big)
&\leq\sum_{i=1}^\ell\mm\big(B_{r}(x_i)\setminus G_i\big)
\leq C_N\eta\sum_{i=1}^\ell\mm\big(B_{r}(x_i)\big)\\
&\leq C_N\eta\sum_{i=1}^\ell\mm\big(B_{r/5}(x_i)\big)
\leq C_N\eta\,\mm\big(B_{R+1}(\bar x)\big).
\end{split}\]
Choosing $\eta>0$ such that $C_N\eta\,\mm\big(B_{R+1}(\bar x)\big)<\eps/2$ we get the sought conclusion.
\end{proof}
\subsection{Behaviour of the reference measure under charts}\label{subsec:measure}
Aim of this subsection is to prove absolute continuity of the reference measure $\mm$ of an $\RCD(K,N)$ metric measure space $(X,\sfd,\mm)$ with respect to the relevant Hausdorff measure. This result was first proved in the three independent works \cite{DPMR16,MK16,GP16-2}, heavily relying on \cite{DPR}. The strategy of our proof is essentially taken from \cite{GP16-2}, the main technical simplification being that the charts providing rectifiability in our case are harmonic (indeed they are $\delta$-splitting maps), while in \cite{GP16-2} they were distance functions.

Let us introduce the notation we are going to use in this subsection.\\
Let \(X,Y\) be Polish spaces. Fix a finite Borel measure \(\mu\geq 0\)
on \(X\) and a Borel  map \(\varphi\colon X\to Y\). We shall denote by $\varphi_*$ the pushforward operator, which sends finite Borel measures on $X$ into finite Borel measures on $Y$. Then we define
\begin{equation}\label{eq:def_Pr_functions}
{\sf Pr}_\varphi(f)\coloneqq\frac{\d\varphi_*(f\mu)}{\d\varphi_*\mu}
\quad\text{ for every }f\in L^1(\mu),
\end{equation}
where we adopted the usual notation of geometric measure theory for the density of a measure absolutely continuous with respect to another measure.
The resulting map \({\sf Pr}_\varphi\colon L^1(\mu)\to L^1(\varphi_*\mu)\)
is linear and continuous. Given any \(p\in(1,\infty]\), it holds that
\({\sf Pr}_\varphi\) maps continuously \(L^p(\mu)\) to \(L^p(\varphi_*\mu)\).
The \emph{essential image} of a Borel set \(E\subseteq X\) is defined as
\({\sf Im}_\varphi(E)\coloneqq\big\{{\sf Pr}_\varphi(\nchi_E)>0\big\}\subseteq Y\).
\begin{proposition}[Differential of an \(\R^k\)-valued Lipschitz map]\label{prop:D_phi}
Let \((X,\sfd,\mu)\) be an infinitesimally Hilbertian metric measure space
such that \(\mu\) is finite. Let \(\varphi\colon X\to\R^k\) be a Lipschitz map.
Then there exists a unique linear and continuous operator
\({\sf D}_\varphi\colon L^2_\mu(TX)\to L^2(\R^k,\R^k;\varphi_*\mu)\) such that
\begin{equation}\label{eq:def_D_phi}
\int_F\nabla f\cdot{\sf D}_\varphi(v)\,\d\varphi_*\mu
=\int_{\varphi^{-1}(F)}\nabla_\mu(f\circ\varphi)\cdot v\,\d\mu
\quad\forall f\in C^\infty_c(\R^k),\,v\in L^2_\mu(TX),\,F\subseteq\R^k
\text{ Borel.}
\end{equation}
In particular, if \(v\in D({\rm div}_\mu)\), then the distributional divergence
of \({\sf D}_\varphi(v)\) is given by \({\sf Pr}_\varphi\big({\rm div}_\mu(v)\big)\).

Moreover, if the map \(\varphi\) is biLipschitz with its image when restricted to some
Borel set \(E\subseteq X\) and \(v_1,\ldots,v_k\in L^2_\mu(TX)\) are independent on \(E\),
then the vectors
\({\sf D}_\varphi(\nchi_E\,v_1)(y),\ldots,{\sf D}_\varphi(\nchi_E\,v_k)(y)\)
constitute a basis of \(\R^k\) for \(\varphi_*\mu\)-a.e.\ point
\(y\in{\sf Im}_\varphi(E)\).
\end{proposition}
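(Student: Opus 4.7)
The plan is to take as ansatz the explicit formula
\[
{\sf D}_\varphi(v):=\bigl({\sf Pr}_\varphi(\nabla_\mu\varphi_a\cdot v)\bigr)_{a=1}^k,
\]
which is well-defined, linear, and $L^2$-continuous: indeed $|\nabla_\mu\varphi_a|\leq\Lip(\varphi_a)$ $\mu$-a.e., so $\nabla_\mu\varphi_a\cdot v\in L^2(\mu)$ whenever $v\in L^2_\mu(TX)$, and ${\sf Pr}_\varphi$ is bounded from $L^2(\mu)$ to $L^2(\varphi_*\mu)$. To verify \eqref{eq:def_D_phi} for this choice I would combine the chain rule $\nabla_\mu(f\circ\varphi)=\sum_a(\partial_a f\circ\varphi)\nabla_\mu\varphi_a$ with the tautology
\[
\int_X(q\circ\varphi)\,h\,\d\mu=\int_{\R^k}q\cdot{\sf Pr}_\varphi(h)\,\d\varphi_*\mu,
\]
valid for any bounded Borel $q$ and any $h\in L^1(\mu)$ directly from \eqref{eq:def_Pr_functions}: applying the latter with $q=\nchi_F\,\partial_a f$ and $h=\nabla_\mu\varphi_a\cdot v$, summing over $a$, and recognizing the chain rule in the integrand on $X$ produces exactly \eqref{eq:def_D_phi}. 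Uniqueness is immediate: any $W\in L^2(\R^k,\R^k;\varphi_*\mu)$ satisfying $\int_F\nabla f\cdot W\,\d\varphi_*\mu=0$ for all Borel $F$ and $f\in C^\infty_c(\R^k)$ must vanish, as seen by taking $f(x)=\eta(x)\,x_a$ for $\eta$ a smooth cut-off and letting $F$ range within $\{\eta=1\}$.

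Next, for the divergence claim I would specialize \eqref{eq:def_D_phi} to $F=\R^k$. Since $\mu$ is finite and $\varphi$ Lipschitz, $f\circ\varphi$ is bounded and Lipschitz and hence an admissible test function in $H^{1,2}(X,\sfd,\mu)$; integration by parts then rewrites $\int\nabla_\mu(f\circ\varphi)\cdot v\,\d\mu$ as $-\int(f\circ\varphi)\,{\rm div}_\mu(v)\,\d\mu$, and a second application of the tautology above (now with $q=f$, $h={\rm div}_\mu(v)$) converts this into $-\int f\cdot{\sf Pr}_\varphi({\rm div}_\mu(v))\,\d\varphi_*\mu$. This is the desired distributional identity ${\rm div}({\sf D}_\varphi(v))={\sf Pr}_\varphi({\rm div}_\mu(v))$ on $(\R^k,\varphi_*\mu)$.

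The basis statement is the main obstacle, and is where the biLipschitz hypothesis becomes indispensable. I would proceed in two steps. First, a change-of-variables identity for ${\sf Pr}_\varphi$ in the biLipschitz setting: since $\varphi|_E$ is injective, for any bounded Borel $g$ one has $\varphi_*(\nchi_E g\,\mu)=\bigl(g\circ(\varphi|_E)^{-1}\bigr)\,\varphi_*(\mu\res E)$, and dividing by $\varphi_*\mu$ yields
\[
{\sf Pr}_\varphi(\nchi_E g)(y)=g\bigl((\varphi|_E)^{-1}(y)\bigr)\cdot{\sf Pr}_\varphi(\nchi_E)(y)\quad\text{for }\varphi_*\mu\text{-a.e.\ }y\in{\sf Im}_\varphi(E).
\]
Applied with $g=\nabla_\mu\varphi_a\cdot v_i$ this identifies the matrix $A(y):=\bigl({\sf D}_\varphi(\nchi_E v_i)_a(y)\bigr)_{a,i}$ with ${\sf Pr}_\varphi(\nchi_E)(y)>0$ times the pointwise matrix $M(x):=\bigl(\nabla_\mu\varphi_a(x)\cdot v_i(x)\bigr)_{a,i}$ evaluated at $x=(\varphi|_E)^{-1}(y)$; since $\varphi|_E$ transports $\mu$-null subsets of $E$ to $\varphi_*\mu$-null subsets of ${\sf Im}_\varphi(E)$ (the measures $\varphi_*(\mu\res E)$ and $\varphi_*\mu$ being equivalent there), the problem reduces to showing $\det M(x)\neq 0$ at $\mu$-a.e.\ $x\in E$. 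The second step is the technical heart: observe that the $i$-th column of $M(x)$ is $d_x\varphi(v_i(x))$ for $d_x\varphi\colon T_xX\to\R^k$ the pointwise differential $w\mapsto(\nabla_\mu\varphi_a(x)\cdot w)_a$, so once $d_x\varphi$ is shown to be injective at $\mu$-a.e.\ $x\in E$ the assumed independence of $v_1,\ldots,v_k$ on $E$ automatically forces independence of the columns of $M(x)$. The injectivity of $d_x\varphi$ on $E$ is the hardest technical point I expect to face: it must be extracted from the lower Lipschitz bound $|\varphi(x)-\varphi(y)|\geq L^{-1}\sfd(x,y)$ on $E$ by a Lebesgue-density localization argument ruling out nontrivial $L^\infty$-module combinations $\sum_a c_a(x)\nabla_\mu\varphi_a(x)\equiv 0$ on positive-measure subsets of $E$, which would otherwise contradict the biLipschitz estimate via the corresponding linear combination $\sum_a c_a\varphi_a$ tested near density points.
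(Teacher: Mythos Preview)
Your construction of \({\sf D}_\varphi\) via the explicit formula \({\sf D}_\varphi(v)=\bigl({\sf Pr}_\varphi(\nabla_\mu\varphi_a\cdot v)\bigr)_{a=1}^k\), the chain-rule verification of \eqref{eq:def_D_phi}, the uniqueness argument, and the divergence computation are all correct and in fact more explicit than the paper's treatment, which simply sets \({\sf D}_\varphi\coloneqq\iota\circ{\sf Pr}_\varphi\circ\d\varphi\) and defers everything to results in \cite{GP16-2}. Your change-of-variables reduction of the basis claim to the invertibility of the pointwise matrix \(M(x)=\bigl(\nabla_\mu\varphi_a\cdot v_i\bigr)(x)\) on \(E\) is also sound.

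The gap is in your final step. You correctly identify that invertibility of \(M\) follows once \(d_x\varphi\colon w\mapsto(\nabla_\mu\varphi_a\cdot w)_a\) is injective \(\mu\)-a.e.\ on \(E\), but you then propose to obtain this by ruling out nontrivial relations \(\sum_a c_a\nabla_\mu\varphi_a=0\) on subsets of \(E\). These are dual conditions: since \(\ker(d_x\varphi)=\bigl(\mathrm{span}\{\nabla_\mu\varphi_a\}\bigr)^\perp\), injectivity of \(d_x\varphi\) is equivalent to the \(\nabla_\mu\varphi_a\) \emph{spanning} the tangent module on \(E\), whereas your density-point sketch establishes their \emph{independence}. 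The two coincide only once the tangent module has dimension exactly \(k\) on \(E\); the assumed independence of the \(v_i\) gives dimension \(\geq k\), but dimension \(\leq k\) is the substantive missing ingredient. What one actually needs is that for every Lipschitz \(g\) on \(X\) one has \(\nabla_\mu g\in\mathrm{span}\{\nabla_\mu\varphi_a\}\) \(\mu\)-a.e.\ on \(E\): this follows by writing \(g|_E=h\circ\varphi|_E\) for some Lipschitz \(h\) on \(\R^k\) (using the biLipschitz hypothesis and McShane extension) and then invoking a chain rule for Lipschitz outer functions together with locality of the differential on Borel sets. The paper does not spell this out either---it cites \cite[Propositions~2.2 and~2.10]{GP16-2} for the whole basis statement---but your sketch as written proves the wrong half of the duality and would not close the argument without this additional input.
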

\begin{proof}
Existence of the map \({\sf D}_\varphi\) is proven in \cite{GP16-2}:
with the terminology used therein, it suffices to define
\({\sf D}_\varphi\coloneqq\iota\circ{\sf Pr}_\varphi\circ\d\varphi\).
The fact that this map satisfies \eqref{eq:def_D_phi} follows from
\cite[Proposition 2.7]{GP16-2} and the very definition of \(\iota\)
(we do not need to require properness of \(\varphi\), as \(\mu\) is a finite measure).
Uniqueness of \({\sf D}_\varphi\) follows from the fact that
\(\big\{\nabla f\,:\,f\in C^\infty_c(\R^k)\big\}\)
generates \(L^2(\R^k,\R^k;\varphi_*\mu)\). Now suppose \(v\in D({\rm div}_\mu)\).
Then for every \(f\in C^\infty_c(\R^k)\) it holds that
\(f\circ\varphi\in H^{1,2}(X,\sfd,\mu)\), whence
\[\begin{split}
\int\nabla f\cdot{\sf D}_\varphi(v)\,\d\varphi_*\mu
&\overset{\eqref{eq:def_D_phi}}=\int\nabla_\mu(f\circ\varphi)\cdot v\,\d\mu
=-\int f\circ\varphi\,{\rm div}_\mu(v)\,\d\mu
=-\int f\,\d\varphi_*\big({\rm div}_\mu(v)\mu\big)\\
&\overset{\eqref{eq:def_Pr_functions}}=
-\int f\,{\sf Pr}_\varphi\big({\rm div}_\mu(v)\big)\,\d\varphi_*\mu.
\end{split}\]
This shows that the distributional divergence of \({\sf D}_\varphi(v)\)
is represented by \({\sf Pr}_\varphi\big({\rm div}_\mu(v)\big)\).
Finally, the last claim of the statement follows from \cite[Proposition 2.2]{GP16-2}
and \cite[Proposition 2.10]{GP16-2}.
\end{proof}
\begin{theorem}[Behaviour of \(\mm\) under charts]
Let \((X,\sfd,\mm)\) be an \(\RCD(K,N)\) space.
Consider a \(\delta\)-splitting map \(u\colon B_r(p)\to\R^k\)
which is \((1+\eps)\)-biLipschitz with its image (for some
\(\eps<1/k\)) when restricted to some compact set
\(K\subseteq B_r(p)\). Then it holds that
\[
u_*(\mm|_K)\ll\mathcal L^k.\]
In particular, for any \(k\in\N\), \(k\leq N\), \(\mm|_{\mathcal R_k}\)
is absolutely continuous with respect to the \(k\)-dimensional Hausdorff measure
on \((X,\sfd)\).
\end{theorem}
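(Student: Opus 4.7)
The plan is to adapt the strategy of \cite{GP16-2} to our harmonic setting. First I would pass to a finite reference measure: choose a good cut-off $\eta\in\Test(X)$ with $\eta\equiv 1$ on $K$ and $\supp(\eta)\Subset B_r(p)$ (Lemma~\ref{lem:good_cut-off}), and set $\mu:=\eta\mm$. Proposition~\ref{prop:D_phi} applied with $\varphi=u$ then provides the differential ${\sf D}_u\colon L^2_\mu(TX)\to L^2(\R^k,\R^k;u_*\mu)$, which will be the main tool of the proof.

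For each $a=1,\dots,k$ consider the pushed vector field $w_a:={\sf D}_u(\nchi_K\nabla_\mm u_a)$. Testing the defining identity \eqref{eq:def_D_phi} against affine coordinate functions yields the explicit formula $(w_a)_b={\sf Pr}_u(\nchi_K\,\nabla_\mm u_a\cdot\nabla_\mm u_b)$. Thanks to property $\rm iii)$ of Definition~\ref{def:splitting maps}, the $k\times k$ matrix field $M(y):=\bigl((w_a(y))_b\bigr)_{a,b}$ is $L^1(u_*\mu)$-close to $\rho(y)\,\mathrm{Id}$, where $\rho:={\sf Pr}_u(\nchi_K)$ is the density of $u_*(\mm\res K)$ with respect to $u_*\mu$. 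Combined with the $(1+\eps)$-biLipschitz hypothesis on $K$ (with $\eps<1/k$), this forces the gradients $\nabla_\mm u_1,\dots,\nabla_\mm u_k$ to be $\mm$-a.e.\ linearly independent on $K$, so the last assertion of Proposition~\ref{prop:D_phi} grants that $w_1(y),\dots,w_k(y)$ form a basis of $\R^k$ for $u_*\mu$-a.e.\ $y\in{\sf Im}_u(K)$.

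Next I would extract a bounded-variation structure using harmonicity. Since $\Delta_\mm u_a=0$ on $B_r(p)$, the vector field $\eta\nabla_\mm u_a$ lies in $D(\dive_\mm)$ with $\dive_\mm(\eta\nabla_\mm u_a)=\nabla_\mm\eta\cdot\nabla_\mm u_a\in L^\infty(\mm)$; since $\nabla_\mm\eta$ vanishes on $K$, this divergence is concentrated on $B_r(p)\setminus K$. By Proposition~\ref{prop:D_phi}, the vector-valued measure ${\sf D}_u(\eta\nabla_\mm u_a)\,u_*\mu$ then has a bounded-function divergence, supported in ${\sf Im}_u(B_r(p)\setminus K)$. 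Replacing $\eta$ by $\nchi_K$ introduces a perturbation controlled by the $\delta$-splitting condition, so up to a small $L^1$-error the vector-valued measures $T_a:=w_a\,u_*\mu$ are of bounded variation on ${\sf Im}_u(K)$, while their matrix of densities $M$ is $u_*\mu$-a.e.\ invertible there.

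These two ingredients---basis property plus bounded-variation divergence---fit the framework of \cite{DPR} used in \cite{GP16-2}: the PDE constraint on the vector-valued Radon measure $(T_1,\dots,T_k)$ forces the singular part (with respect to $\Leb^k$) of $u_*\mu$ restricted to ${\sf Im}_u(K)$ to vanish, and hence $u_*(\mm\res K)\ll\Leb^k$. The main obstacle is precisely this final step, and it is exactly here that harmonicity provides a genuine simplification with respect to \cite{GP16-2}: the natural vector fields $\eta\nabla_\mm u_a$ have essentially bounded divergence (rather than a generic Radon-measure divergence as in the distance-function setting), which rules out singular contributions on ${\sf Im}_u(K)$ more directly. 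The ``in particular'' clause then follows by combining the above with Theorem~\ref{thm:rect}: $\mathcal R_k$ is covered, up to $\mm$-null sets, by countably many compact sets on which some $\delta$-splitting map is $(1+\eps)$-biLipschitz, and transferring absolute continuity through each biLipschitz chart (which sends $\Leb^k$ to a measure comparable to $\haus^k$) yields $\mm\res\mathcal R_k\ll\haus^k$.
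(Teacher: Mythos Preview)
Your overall architecture matches the paper's: pass to a finite measure, use the differential ${\sf D}_\varphi$ of Proposition~\ref{prop:D_phi}, exploit the $(1+\eps)$-biLipschitz hypothesis with $\eps<1/k$ to get linear independence of $\nabla_\mm u_1,\dots,\nabla_\mm u_k$ on $K$, and feed the resulting normal currents into \cite{DPR}. The ``in particular'' clause is also handled the same way.

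However, there is a real gap in your third paragraph. You have two distinct families of vector fields playing incompatible roles: $\eta\nabla_\mm u_a$ lies in $D(\dive_\mm)$ and hence pushes forward to a normal current, while $\nchi_K\nabla_\mm u_a$ is the one whose pushforwards $w_a$ form a basis on ${\sf Im}_u(K)$. Your attempt to bridge them --- ``replacing $\eta$ by $\nchi_K$ introduces a perturbation controlled by the $\delta$-splitting condition, so up to a small $L^1$-error the measures $T_a$ are of bounded variation'' --- does not work. The discrepancy $(\eta-\nchi_K)\nabla_\mm u_a$ is supported on $B_r(p)\setminus K$, and nothing in the $\delta$-splitting definition (which controls only averages of $|\Hess(u_a)|^2$ and $|\nabla_\mm u_a\cdot\nabla_\mm u_b-\delta_{ab}|$) bounds its contribution; $\delta$ is fixed and irrelevant here. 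Moreover, ``normal current up to a small $L^1$-error'' is not a hypothesis under which \cite{DPR} applies: you need a genuine measure divergence, not an approximate one. Note also that after pushforward the set $u(B_r(p)\setminus K)$ can overlap $u(K)$, since $u$ is only injective on $K$, so the divergence of ${\sf D}_u(\eta\nabla_\mm u_a)\,u_*\mu$ need not be ``supported in ${\sf Im}_u(B_r(p)\setminus K)$'' in any useful sense.

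The paper resolves this by an approximation step you are missing: take Lipschitz $\psi_i\colon X\to[0,1]$ with $\psi_i\to\nchi_K$ pointwise, set $v^i_a:={\sf D}_\varphi(\psi_i\nabla_\mu\varphi_a)$, and observe that each $\psi_i\nabla_\mu\varphi_a\in D(\dive_\mu)$ by the Leibniz rule, so $v^i_a\,\varphi_*\mu$ is an honest normal $1$-current. Then \cite[Corollary~1.12]{DPR} gives $(\varphi_*\mu)|_{A_i}\ll\mathcal L^k$, where $A_i$ is the set on which $v^i_1,\dots,v^i_k$ are independent. Finally $v^i_a\to v_a={\sf D}_\varphi(\nchi_K\nabla_\mu\varphi_a)$ in $L^2$ and hence (along a subsequence) pointwise $\varphi_*\mu$-a.e., so the basis property of the $v_a$ on ${\sf Im}_\varphi(K)$ forces $(\varphi_*\mu)\bigl({\sf Im}_\varphi(K)\setminus\bigcup_i A_i\bigr)=0$, and one concludes. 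A smaller point: your appeal to property iii) in the second paragraph is unnecessary and slightly misleading --- the independence of the gradients on $K$ follows from the biLipschitz bound with $\eps<1/k$ alone, as the paper indicates.
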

\begin{proof}
First of all, fix a good cut-off function \(\eta\colon X\to\R\) for the pair
\(K\subseteq B_r(p)\), in the sense of Lemma \ref{lem:good_cut-off}.
Define \(\mu\coloneqq\mm|_{B_r(p)}\) and \(\varphi\coloneqq\eta u\colon X\to\R^k\).
Observe that the components \(\varphi_1,\ldots,\varphi_k\)  of \(\varphi\) are test
functions and \(\varphi|_K\) is \((1+\eps)\)-biLipschitz with its image. Consider the
differential \({\sf D}_\varphi\colon L^2_\mu(TX)\to L^2(\R^k,\R^k;\varphi_*\mu)\)
defined in Proposition \ref{prop:D_phi}. Fix a sequence \((\psi_i)_i\)
of compactly-supported, Lipschitz functions \(\psi_i\colon X\to[0,1]\)
that pointwise converge to \(\nchi_K\). We then set
\[
v^i_a\coloneqq{\sf D}_\varphi(\psi_i\nabla_\mu\varphi_a)\in
L^2(\R^k,\R^k;\varphi_*\mu)\quad\text{ for every }i\in\N\text{ and }a=1,\ldots,k.
\]
Note that \(\psi_i\nabla_\mu \varphi_a\in D({\rm div}_\mu)\)
by the Leibniz rule for divergence and the fact that
\(\varphi_a\in D(\Delta_\mu)\), whence Proposition \ref{prop:D_phi}
ensures that the distributional divergence of each vector field
\(v^i_a\) is an \(L^2(\varphi_*\mu)\)-function.
Hence, it holds that \(\mathcal I_{ia}\coloneqq v^i_a\,\varphi_*\mu\)
is a normal \(1\)-current in \(\R^k\) (see \cite[Corollary 2.12]{GP16-2}). Note also that
\[
\overrightarrow{\mathcal I_{ia}}=\nchi_{\{|v^i_a|>0\}}\frac{v^i_a}{|v^i_a|}
\;\text{ and }\;\|\mathcal I_{ia}\|=|v^i_a|\,\varphi_*\mu\quad\text{ for every }
i\in\N\text{ and }a=1,\ldots,k.
\]
Call \(A_i\) the set of \(y\in\R^k\) such that
\(v^i_1(y),\ldots,v^i_k(y)\) form a basis of \(\R^k\).
Since \((\varphi_*\mu)|_{A_i}\ll\|\mathcal I_{ia}\|\) holds for all \(a=1,\ldots,k\),
by applying \cite[Corollary 1.12]{DPR} we deduce that
\begin{equation}\label{eq:meas_ac_aux}
(\varphi_*\mu)|_{A_i}\ll\mathcal L^k\quad\text{ for every }i\in\N.
\end{equation}
Now define \(v_a\coloneqq{\sf D}_\varphi(\nchi_K\nabla_\mu \varphi_a)
\in L^2(\R^k,\R^k;\varphi_*\mu)\) for every \(a=1,\ldots,k\).
It can readily checked that \(\nabla_\mu\varphi_1,\ldots,\nabla_\mu\varphi_k\)
are independent on \(K\) (here the assumption \(\eps<1/k\) plays a role),
whence the vectors \(v_1(y),\ldots,v_k(y)\) are linearly independent for
\(\varphi_*\mu\)-a.e.\ \(y\in{\sf Im}_\varphi(K)\) by Proposition \ref{prop:D_phi}.\\
Furthermore, for any given \(j=1,\ldots,k\),
we can see (by using dominated convergence theorem) that
\(\psi_i\nabla_\mu\varphi_a\to\nchi_K\nabla_\mu\varphi_a\) in
\(L^2_\mu(TX)\) as \(i\to\infty\), thus \(v^i_a\to v_a\)
in \(L^2(\R^k,\R^k;\varphi_*\mu)\) as \(i\to\infty\) by continuity of
\({\sf D}_\varphi\). In particular, possibly passing to a not
relabelled subsequence, we can assume that \(\lim_i v^i_a(y)=v_a(y)\)
for \(\varphi_*\mu\)-a.e.\ \(y\in\R^k\). This implies that
\((\varphi_*\mu)\big({\sf Im}_\varphi(K)\setminus\bigcup_i A_i\big)=0\), thus
\eqref{eq:meas_ac_aux} yields \((\varphi_*\mu)|_{{\sf Im}_\varphi(K)}\ll\mathcal L^k\).
Since \({\sf Im}_\varphi(K)=\big\{{\sf Pr}_\varphi(\nchi_K)>0\big\}\)
by definition, we conclude that
\[
u_*(\mm|_K)=\varphi_*(\mu|_K)=\frac{\d\varphi_*(\nchi_K\mu)}{\d\varphi_*\mu}\,\varphi_*\mu
={\sf Pr}_\varphi(\nchi_K)\,\varphi_*\mu\ll\mathcal L^k.
\]
Therefore, the first part of the statement is finally achieved.

The second part of the statement follows from the first one, the
inner regularity of \(\mm\) and (the proof of) Theorem \ref{thm:rect}.
\end{proof}

\def\cprime{$'$} \def\cprime{$'$}

\end{document}